\documentclass{amsart}
\usepackage{amsmath}
\usepackage{amsfonts}
\usepackage{amsthm}
\usepackage{comment}
\usepackage{url}
\usepackage{color}
\usepackage{enumitem}
\setlist[enumerate]{font=\normalfont}
\usepackage{mathtools}
\usepackage[shortalphabetic]{amsrefs}
\usepackage{latexsym}
\usepackage{amssymb}
\usepackage{graphicx}        
\usepackage{float}
\usepackage{youngtab}
\usepackage[colorinlistoftodos]{todonotes}
\usepackage[colorlinks=true, allcolors=blue]{hyperref}
\usepackage{subcaption}

\usepackage{eucal}
\usepackage{xcolor}
\theoremstyle{plain}
\newtheorem*{theorem*}{Theorem}
\newtheorem{theorem}{Theorem}[section]
\newtheorem{corollary}[theorem]{Corollary}
\newtheorem{lemma}[theorem]{Lemma}

\theoremstyle{definition}
\newtheorem{definition}[theorem]{Definition}
\newtheorem{construction}[theorem]{Construction}

\newtheorem*{example*}{Example}

\theoremstyle{remark}
\newtheorem{remark}[theorem]{Remark}
\newtheorem*{remark*}{Remark}

\newcommand{\RR}{\mathbb{R}}

\newcommand{\NN}{\mathbb{N}}

\DeclareMathOperator{\calF}{\mathcal{F}}
\DeclareMathOperator{\calG}{\mathcal{G}}

\title{Harmonic hierarchies for polynomial optimization.}
\date{}

\begin{document}
\author{Sergio Cristancho}
\address{Sergio Cristancho, Departamento de
  Matem\'aticas\\ Universidad de los Andes\\ Carrera 1 No. 18a 10\\ Edificio
  H\\ Primer Piso\\ 111711 Bogot\'a\\ Colombia} 
\email{se.cristancho@uniandes.edu.co}
\author{Mauricio Velasco}
\address{Mauricio Velasco, Departamento de
  Matem\'aticas\\ Universidad de los Andes\\ Carrera 1 No. 18a 10\\ Edificio
  H\\ Primer Piso\\ 111711 Bogot\'a\\ Colombia}
\email{mvelasco@uniandes.edu.co}

\keywords{Polynomial optimization, linear hierarchies, semidefinite hierarchies, polynomial kernels}
\subjclass[2010]{62G05, 62H10, 62H30}

\begin{abstract} We introduce novel polyhedral approximation hierarchies for the cone of nonnegative forms on the unit sphere in $\mathbb{R}^n$ and for its (dual) cone of moments. We prove computable quantitative bounds on the speed of convergence of such hierarchies. We also introduce a novel optimization-free algorithm for building converging sequences of lower bounds for polynomial minimization problems on spheres. Finally some computational results are discussed, showcasing our implementation of these hierarchies in the programming language Julia. 
\end{abstract}

\maketitle{}

\section{Introduction}

One of the most basic problems of modern optimization is trying to find the minimum value $\alpha^*$ of a multivariate polynomial $f(x)$ over a compact set $S\subseteq \RR^n$.
Its importance stems from at least two sources: because it serves as a rich model for non-convex global optimization problems and because it has a wealth of applications to which entire books have been devoted~\cites{LBook,LBook2,LBook3,BPT}. A possible approach for solving such problems, pioneered by Shor, Parrilo and Lasserre proposes reformulating them as optimization problems over the cone $P_S$ of polynomials of the same degree as $f$ which are nonnegative on the set $S$, obtaining $\alpha^*$ as
\[\alpha^*=\sup\left\{\lambda\in \RR : f(x)-\lambda\in P_S\right\}.\]
The success of this approach depends on having a description of $P_S$ suitable for optimization. Although exact descriptions of the cone $P_S$ are known for a few sets $S$, (see~\cite{BGP, BSV, BSV2,BSiV}) the most common and practically successful strategy has been the construction of inner (resp. outer) approximation hierarchies for $P_S$ (see for instance~\cite{Pthesis, Lhierarchy, LOther, DeWolff, Venkat, AAA}). An inner (resp. outer) approximation hierarchy is a collection of convex cones $(C_j)_{j\in \mathbb{N}}$ which are contained in $P_S$ (resp. contain $P_S$) and converge to $P_S$ in the sense that the equality $\overline{\bigcup_{j=0}^{\infty} C_j}=P_S$ holds (resp. $\bigcap_{j=0}^{\infty} C_j=P_S$ holds). If the cones $C_j$ form a converging hierarchy then the real numbers
\[\alpha_j:=\sup\left\{\lambda: f(x)-\lambda \in C_j\right\}\]
converge to $\alpha^*$ as $j\rightarrow \infty$ and can be much easier to compute than $\alpha^*$ if the $C_j$ are chosen to be highly structured convex sets such as polyhedra, spectrahedra or their projections. 

The purpose of this article is to introduce several new polyhedral converging hierarchies for approximating the cones $P_{2k}$ of forms of degree $2k$ in the variables $x_1,\dots, x_n$  which are nonnegative on the unit sphere $S\subseteq \RR^n$ and to give quantitative bounds on their rates of convergence.  We call them {\it harmonic hierarchies} because they are closely related with harmonic analysis on spheres (or equivalently with the representation theory of the group $SO(n)$).  

In order to describe our results precisely we need  two preliminary concepts: {\it cubature rules} and {\it polynomial averaging operators} and thus begin by briefly recalling their definitions. Let $R:=\RR[x_1,\dots,x_n]$ be the ring of polynomials with real coefficients, let $R_{k}\subseteq R$ be the subspace of homogeneous polynomials of degree $k$ and let $\mu$ be the $(n-1)$-dimensional area measure on the sphere $S\subseteq \RR^n$. Recall that a {\it cubature rule} of algebraic degree $2t$ for $\mu$ is a pair $(X,W)$ where $X\subseteq S$ is a finite set  and $W: X\rightarrow \RR_{>0}$ is a nonnegative function for which the following equality holds 
\[\forall f\in R_{2t}\left(\int_S f(y) d\mu(y) = \sum_{x\in X} W(x)f(x)\right).\]
If $g(t)$ is a univariate polynomial which is nonnegative on the interval $[-1,1]$, we define its {\it polynomial averaging operator} $\Gamma_g: R\rightarrow R$ by the convolution formula
\[\Gamma_g(f)(x):=\int_{S}g\left(\langle x,y\rangle \right)f(y)d\mu(y)\]

Our first result shows that the interplay of cubature rules and averaging operators can be used to construct polyhedra inside $P_{2k}$,

\begin{theorem}\label{Thm: polyhedra} Let $h(t)=a_0+a_2t^2+\dots+ a_{2s}t^{2s}$ be an even univariate polynomial which is nonnegative on $[-1,1]$ and let $k$ be a positive integer. Define the linear map $\hat{\Gamma}_h: R_{2k}\rightarrow R_{2k}$ by the formula
\[\hat{\Gamma}_h(f)=\sum_{j=0}^s a_{2j} \frac{\int_S\langle x,y\rangle^{2j} f(y)d\mu(y)}{\|x\|^{2(j-k)}}.\]
If $Q$ is the set of polynomials in $R_{2k}$ that have nonnegative values at all points $X$ of a cubature rule $(X,W)$ of algebraic degree $2(s+k)$, then the set $A:=\hat{\Gamma}_h(Q)$ is a polyhedral cone in $R_{2k}$ and the inclusion $A\subseteq P_{2k}\subseteq Q$ holds.
\end{theorem}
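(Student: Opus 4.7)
The statement bundles three independent claims: (a) $P_{2k}\subseteq Q$, (b) $A$ is a polyhedral cone, and (c) $A\subseteq P_{2k}$. The first two are essentially formal and I would dispatch them first; the content of the theorem lies in (c), where the cubature hypothesis is used. I would set up the proof so that everything hinges on evaluating $\hat{\Gamma}_h(f)(x)$ at a unit vector $x$ and recognizing the resulting integral as one to which the cubature rule applies exactly.

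\emph{Step 1 (the easy inclusion and polyhedrality).} The cone $Q$ is cut out of $R_{2k}$ by the finitely many linear inequalities $\{\mathrm{ev}_x\ge 0: x\in X\}$, so $Q$ is polyhedral by construction. Since $\hat{\Gamma}_h$ is a linear map between finite-dimensional real vector spaces, the image $A=\hat{\Gamma}_h(Q)$ is again polyhedral. The inclusion $P_{2k}\subseteq Q$ is immediate: any form nonnegative on $S$ is nonnegative on the finite subset $X\subseteq S$.

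\emph{Step 2 (evaluating $\hat{\Gamma}_h$ on the sphere).} The key computation is that for $x\in S$, i.e.\ $\|x\|=1$, the factor $\|x\|^{2(k-j)}$ equals $1$ for every $j$, so
\[
\hat{\Gamma}_h(f)(x)=\sum_{j=0}^{s}a_{2j}\int_S\langle x,y\rangle^{2j}f(y)\,d\mu(y)=\int_S h(\langle x,y\rangle)\,f(y)\,d\mu(y).
\]
Here I am using linearity of the integral and the definition of $h$. This identifies $\hat{\Gamma}_h(f)$ on $S$ with the ordinary averaging operator $\Gamma_h(f)$ evaluated at $x$.

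\emph{Step 3 (applying the cubature rule).} For fixed $x\in S$, the integrand $y\mapsto h(\langle x,y\rangle)f(y)$ is a polynomial in $y$ of degree $2s+2k=2(s+k)$, which is exactly the algebraic degree of the cubature rule $(X,W)$. Therefore
\[
\hat{\Gamma}_h(f)(x)=\sum_{z\in X}W(z)\,h(\langle x,z\rangle)\,f(z).
\]
Now I argue term by term: $W(z)>0$ by hypothesis; $f(z)\ge 0$ because $f\in Q$; and $h(\langle x,z\rangle)\ge 0$ because $\langle x,z\rangle\in[-1,1]$ by Cauchy--Schwarz applied to unit vectors and $h$ is nonnegative on $[-1,1]$. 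Thus every summand is $\ge 0$, so $\hat{\Gamma}_h(f)(x)\ge 0$ for all $x\in S$, proving $\hat{\Gamma}_h(f)\in P_{2k}$.

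\emph{Where I expect friction.} The only genuine subtlety is checking that the formula defining $\hat{\Gamma}_h$ indeed lands in $R_{2k}$ (so that ``linear map $R_{2k}\to R_{2k}$'' is meaningful); this requires noting that $\int_S\langle x,y\rangle^{2j}f(y)\,d\mu(y)$ is homogeneous of degree $2j$ in $x$ and, via the Funk--Hecke decomposition of $f\in R_{2k}$ into harmonics of degrees $\le 2k$, is divisible by $\|x\|^{2(j-k)}$ whenever $j>k$. Once this is in place the three steps above close the argument without further effort, and the restriction to $x\in S$ in Step 2 is harmless because $P_{2k}$ is by definition the cone of forms nonnegative on $S$.
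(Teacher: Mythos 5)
Your argument follows the same route as the paper's: polyhedrality of $Q$ is definitional, linearity of $\hat{\Gamma}_h$ gives polyhedrality of $A$, the inclusion $P_{2k}\subseteq Q$ is trivial, and the substance is in evaluating $\hat{\Gamma}_h(f)$ on $S$ and applying the cubature rule termwise. Steps~1 and~2 match, and your remark about well-definedness of $\hat{\Gamma}_h$ (that $\Gamma_{2j}(R_{2k})$ is divisible by $\|x\|^{2(j-k)}$ for $j>k$, which the paper isolates in its equivariance lemma) is exactly the right point to flag.

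The one place you gloss over a genuine, if small, gap is Step~3. The paper defines a cubature rule of algebraic degree $2(s+k)$ to be exact only on the subspace $R_{2(s+k)}$ of \emph{homogeneous} polynomials of degree exactly $2(s+k)$, whereas your integrand $y\mapsto h(\langle x,y\rangle)f(y)$ is a sum of homogeneous pieces of degrees $2k, 2k+2,\dots,2(s+k)$; saying it has ``degree $2(s+k)$'' and invoking the rule is not licensed by that definition as stated. The paper closes this by inserting the factor $\|y\|^{2(s-j)}$ (equal to $1$ on $S$) into each term, so the integrand literally becomes a form of degree $2(s+k)$ before the cubature rule is applied. Equivalently one can note once and for all that exactness on $R_{2(s+k)}$ implies exactness on $R_{2d}$ for every $d\le s+k$, since multiplying by $\|y\|^{2(s+k-d)}$ changes neither the integral nor the values at nodes on $S$; either remark would repair your Step~3 in one line.
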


The previous theorem is a convenient method to produce polyhedra inside $P_{2k}$ because, as observed by Blekherman~\cite{BConv}, the averaging maps $\hat{\Gamma}_h$ can be diagonalized explicitly, allowing their efficient computation. This property occurs because the maps $\Gamma_h$ are $SO(n)$-equivariant and thus become diagonal in the harmonic basis. More precisely, recall that every homogeneous polynomial $f\in R_{2k}$ can be written uniquely in its {\it harmonic expansion} as
\[f= \|x\|^{2k}f_0 + \|x\|^{2(k-1)}f_2 + \|x\|^{2(k-2)}f_4 +\dots+ f_{2k}\]
where the $f_{2j}$ are homogeneous harmonic polynomials of degree $2j$ (see Section~\ref{Sec: Harmonic} for details). Using this decomposition, the operators $\hat{\Gamma}_h$ take the following particularly simple form,

\begin{lemma}\label{lem: diagonal} Let $h(t)=\sum_{j=0}^n \lambda_{2j} g_{2j}(t)$ be the unique expression of $h(t)$ as linear combination of Gegenbauer polynomials (suitably normalized as in Definition~\ref{def: normGegenbauer}). If 
\[f= \|x\|^{2k}f_0 + \|x\|^{2(k-1)}f_2 + \|x\|^{2(k-2)}f_4 +\dots+ f_{2k}\]
is the unique harmonic expansion for $f\in R_{2k}$ then the equality
\[\hat{\Gamma}_h(f)=\lambda_0\|x\|^{2k}f_0 + \lambda_2\|x\|^{2(k-1)}f_2 + \lambda_4\|x\|^{2(k-2)}f_4 +\dots+ \lambda_{2k}f_{2k}\]
holds.
\end{lemma}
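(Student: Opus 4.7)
The plan is to combine the $SO(n)$-equivariance of the convolution operators $\Gamma_g$ with the Funk--Hecke formula and to reduce the claim to one harmonic summand by linearity. Since $\hat{\Gamma}_h$ depends linearly on $f$, it suffices to verify the eigenvalue equation on each term $\|x\|^{2(k-i)}f_{2i}$ of the harmonic expansion. Because $\|y\|=1$ throughout the domain of integration, the factor $\|y\|^{2(k-i)}$ is identically $1$ inside the integrand, so the computation collapses to integrals of the form $\int_S \langle x,y\rangle^{2j} f_{2i}(y)\,d\mu(y)$ against the pure harmonic $f_{2i}$---precisely the setting in which Funk--Hecke applies cleanly.

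For $x\neq 0$, write $x=\|x\|\xi$ with $\xi\in S$; homogeneity of the pairing then gives
\[
\int_S \langle x,y\rangle^{2j} f_{2i}(y)\,d\mu(y)=\|x\|^{2j}\int_S \langle \xi,y\rangle^{2j} f_{2i}(y)\,d\mu(y).
\]
Next, expand $t^{2j}=\sum_l c_l^{(2j)} g_l(t)$ in the Gegenbauer basis and apply Funk--Hecke term by term: the normalization fixed in Definition~\ref{def: normGegenbauer} should be precisely the one making convolution with $g_l$ act as the orthogonal projection onto spherical harmonics of degree $l$, yielding
\[\int_S g_l(\langle \xi,y\rangle)\, f_{2i}(y)\,d\mu(y)=\delta_{l,2i}\, f_{2i}(\xi).\]
Hence the inner integral equals $c_{2i}^{(2j)}\,\|x\|^{-2i}f_{2i}(x)$, and plugging this into the defining formula for $\hat{\Gamma}_h$ together with the identity $\|x\|^{2(k-j)}\cdot\|x\|^{2j}\cdot\|x\|^{-2i}=\|x\|^{2(k-i)}$ produces
\[\hat{\Gamma}_h\bigl(\|x\|^{2(k-i)}f_{2i}\bigr) = \Bigl(\sum_j a_{2j}\,c_{2i}^{(2j)}\Bigr)\,\|x\|^{2(k-i)}f_{2i}.\]

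The final step is to identify the bracketed scalar with $\lambda_{2i}$. By linearity of the map sending a polynomial to its Gegenbauer coefficients, the coefficient of $g_{2i}$ in $h(t)=\sum_j a_{2j} t^{2j}$ is precisely $\sum_j a_{2j} c_{2i}^{(2j)}$, which equals $\lambda_{2i}$ by definition. Summing over $i$ then recovers the claimed diagonal action. The main (and only) delicate point is matching the normalization chosen in Definition~\ref{def: normGegenbauer} with the exact version of Funk--Hecke needed to produce $\int_S g_l(\langle\xi,y\rangle)f_{2i}(y)\,d\mu(y)=\delta_{l,2i}f_{2i}(\xi)$; once that alignment is confirmed, the remainder is bookkeeping with powers of $\|x\|$.
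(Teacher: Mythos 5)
Your proof is correct and follows essentially the same route as the paper: both reduce by linearity to a single harmonic component and then invoke the reproducing property of the normalized Gegenbauer polynomials, which the paper packages as $g_{2j}(\langle x,y\rangle)=\phi_x^{(2j)}(y)$ together with $\int_S \phi_x^{(2j)}(y)f_{2\ell}(y)\,d\mu(y)=\delta_{j,\ell}f_{2\ell}(x)$ (Theorem~\ref{Thm: Zonal} and Definition~\ref{def: normGegenbauer}) and you call Funk--Hecke. The only cosmetic difference is bookkeeping: you keep $h=\sum a_{2j}t^{2j}$ and expand each $t^{2j}$ into the Gegenbauer basis before recombining, whereas the paper applies $\hat{\Gamma}_h$ directly in the form $h=\sum \lambda_{2j}g_{2j}$ evaluated at a point of $S$; the normalization worry you flag is exactly what Theorem~\ref{Thm: Zonal} settles.
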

We can now introduce the main construction of this article
\begin{construction}[Linear Harmonic Hierarchies]\label{Const: HH}
Given:
\begin{enumerate}
\item Cubature rules $(X_{2t},W_{2t})$ for $\mu$ of algebraic degree $2t$ for every integer $t$ and
\item A sequence of univariate polynomials $(h_s(t))_{s\in \NN}$ which are nonnegative on the interval $[-1,1]$. 
\end{enumerate}
define the {\it linear harmonic hierarchy} determined by $(1)$ and $(2)$ in degree $2k$ as the sequence of polyhedra $(A_s)_{s\in \NN}$ given by $A_s:=\hat{\Gamma}_{h_s}(Q_s)$ where $d_s:=\deg(h_s)$,
\[Q_{s}:=\left\{F\in R_{2k}: \forall x\in X_{2(k+d_s)}\left(F(x)\geq 0\right)\right\},\]
and $\hat{\Gamma}_{h_s}: R_{2k}\rightarrow R_{2k}$ denotes the averaging operator determined by the polynomial $h_s$, defined in Theorem~\ref{Thm: polyhedra}.
\end{construction}

Our main result gives quantitative convergence bounds for harmonic hierarchies. Such bounds are expressed in terms of the {\it Frobenius threshold of a polynomial $h(t)$ in degree $2k$}, defined as the Frobenius norm of the operator $\hat{\Gamma}_h^{-1}-I: R_k\rightarrow R_k$ or, using the notation of Lemma~\ref{lem: diagonal}, as the quantity
\[\tau_{2k}(h):=\sqrt{\sum_{j=0}^{2k} \dim(H_{2j})\left(\frac{1}{\lambda_{2j}}-1\right)^2}.\]
where ${\rm dim}(H_j)$ denotes the dimension of the space of harmonic polynomials of degree $j$ in $\RR^n$.

\begin{theorem}\label{Thm: bounds} The Harmonic Hierarchies introduced in Construction~\ref{Const: HH} have the following properties:
\begin{enumerate}
\item The sets $(A_s)_{s\in \mathbb{N}}$ are polyhedral cones satisfying $A_s\subseteq P_{2k}\subseteq R_{2k}$ for every integer $s$.
\item Assume $\hat{\Gamma}_{h_s}: R_{2k}\rightarrow R_{2k}$ is invertible. If $f\in R_{2k}$ satisfies the inequality
\[\min_{x\in X_{2(k+d_s)}} f(x) > \frac{\tau_{2k}(h_s)}{\sqrt{\mu(S)}}\|f\|_2\]
then $f\in A_s$.
\item If $\lim_{s\rightarrow \infty}\tau_{2k}(h_s)=0$ then every strictly positive polynomial in $R_{2k}$ is contained in some $A_s$ and in particular the hierachy is convergent in the sense that the following equality holds
\[P_{2k} = \overline{\bigcup_{s=0}^{\infty}A_s}.\] 
\end{enumerate}
\end{theorem}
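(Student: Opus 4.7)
Part (1) is just a restatement of Theorem~\ref{Thm: polyhedra} applied to $h=h_s$: by construction $A_s = \hat\Gamma_{h_s}(Q_s)$ is the image of a polyhedron under a linear map, and the inclusions come from that theorem. The substance lies in parts (2) and (3).

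For part (2), my plan is to reduce membership in $A_s$ to a pointwise inequality on the cubature nodes, and then use the harmonic diagonalization of $\hat\Gamma_{h_s}$ from Lemma~\ref{lem: diagonal} to control the resulting error. Set $T_s := \hat\Gamma_{h_s}^{-1} - I$; under the invertibility hypothesis, $f\in A_s$ is equivalent to $g:=\hat\Gamma_{h_s}^{-1}(f) = f + T_s(f)$ lying in $Q_s$, i.e.\ to $f(x) + T_s(f)(x)\geq 0$ for every $x\in X_{2(k+d_s)}$. So it suffices to prove the uniform estimate $|T_s(f)(x)| \leq \tau_{2k}(h_s)\|f\|_2/\sqrt{\mu(S)}$ for every $x\in S$. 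Writing the harmonic expansion $f = \sum_{j=0}^k \|x\|^{2(k-j)} f_{2j}$ and restricting to the unit sphere, Lemma~\ref{lem: diagonal} gives
\[T_s(f)(x) = \sum_{j=0}^k \left(\frac{1}{\lambda_{2j}} - 1\right) f_{2j}(x).\]
I would then apply Cauchy--Schwarz with weights $w_j := \dim(H_{2j})/\mu(S)$: one factor is exactly $\tau_{2k}(h_s)^2/\mu(S)$, and the other is $\sum_j \mu(S)\,f_{2j}(x)^2/\dim(H_{2j})$. The latter is handled by the classical reproducing-kernel bound $f_{2j}(x)^2\leq (\dim(H_{2j})/\mu(S))\|f_{2j}\|_{L^2(S)}^2$ valid for $x\in S$ and harmonic $f_{2j}$, combined with the Parseval identity $\|f\|_2^2 = \sum_j \|f_{2j}\|_{L^2(S)}^2$, which reflects orthogonality of distinct harmonic components on the sphere.

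For part (3), I would first note that $\tau_{2k}(h_s)\to 0$ forces each $\lambda_{2j}\to 1$ and in particular $\hat\Gamma_{h_s}$ is invertible for all sufficiently large $s$, so part (2) is applicable. Given a strictly positive $f\in R_{2k}$, the minimum $m := \min_{x\in S}f(x)>0$ is a lower bound for $\min_{x\in X_{2(k+d_s)}}f(x)$ while the right-hand side of part (2) tends to zero, so $f\in A_s$ eventually. Every $f\in P_{2k}$ is then approximated by the strictly positive polynomials $f+\varepsilon\|x\|^{2k}$ as $\varepsilon\to 0^+$, giving $P_{2k}\subseteq \overline{\bigcup_s A_s}$; the reverse inclusion follows from part (1) together with the closedness of $P_{2k}$. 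The main technical delicacy is the pointwise reproducing-kernel bound on harmonic components, which relies on identifying $Z_{2j}(x,x)$ with $\dim(H_{2j})/\mu(S)$ under the chosen normalization of the area measure; any miscalibration here would shift the $\mu(S)$ factor in the final estimate, so this is the step I would verify most carefully.
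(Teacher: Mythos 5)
Your proposal is correct and follows essentially the same route as the paper: part (1) via Theorem~\ref{Thm: polyhedra}, part (2) by reducing $f\in A_s$ to nonnegativity of $\hat\Gamma_{h_s}^{-1}(f)$ at the cubature nodes and then bounding $\|\hat\Gamma_{h_s}^{-1}(f)-f\|_\infty$ by the exact same weighted Cauchy--Schwarz on the harmonic components combined with the reproducing-kernel bound $\|f_{2j}\|_\infty^2\leq (\dim H_{2j}/\mu(S))\|f_{2j}\|_2^2$ and Parseval (this is the paper's Lemma~\ref{lem: FrobTineq}), and part (3) by compactness and letting the right-hand side go to zero. Your explicit remark that $\tau_{2k}(h_s)\to 0$ forces eventual invertibility of $\hat\Gamma_{h_s}$, and your spelling out of the $f+\varepsilon\|x\|^{2k}$ approximation for the closure claim, are small clarifications the paper leaves implicit but change nothing substantive.
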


In Corollary~\ref{cor: new_quad} below we give an explicit cubature formula of algebraic degree $2t$ on $S\subseteq \RR^n$ supported on $2(t+1)^{n-1}$ points for every positive integer $t$ which allows us to build harmonic hierarchies for any sequence of polynomials $(h_s)_{s\in \mathbb{N}}$. The following Corollary describes the quantitative behavior of such hierarchies for two different choices of the sequence $(h_s)_s$. The delicate convergence estimates involved are contained in work of Blekherman~\cite{BConv} and Fang-Fawzi~\cite{FangFawzi} further discussed in Section~\ref{Sec: KernelSelection}.

\begin{corollary}\label{cor: various_g} The following statements hold:
\begin{enumerate}
\item If $h_s(t):=\frac{t^{2s}}{\int_Sy^{2s}d\mu(y)}$, then for every integer $k$ the following inequality holds:
\[
\frac{1+\frac{n}{2}}{s}+O\left( {\textstyle\frac{1}{s^2} }\right) \leq \tau_{2k}(h_s) \leq  D_{2k}\frac{k^2+\frac{kn}{2} }{s} + O\left( \tfrac{1}{s^2} \right),
\]
where $D_{2k}=\max_{j=0,\dots, k} \dim(H_{2j})$.
\item If  $
h_s(t)=q_s(t)^2=\sum_{j=0}^{2s} \lambda_j g_j(t)$,
where $q_s(t) = \sum_{j=0}^s \eta_j g_j(t)$ is the solution to 
\[
\rho^*_{2k,s} = \min_{q_s,\lambda_0=1} \sum\limits_{j=0}^k (1-\lambda_{2j}),
\]
then for every integer $k$ the following inequality holds:
\[
\tau_{2k}(h_s) \leq \sqrt{D_{2k}}k^{2} n^2 O\left(\tfrac{1}{s^2}\right).
\]
\end{enumerate}
In particular the harmonic hierarchies $(A_s)_{s\in \mathbb{N}}$ determined by both sequences $(h_s)_{s\in \mathbb{N}}$ converge to $P_{2k}$ as $s\rightarrow \infty$ in either case.
\end{corollary}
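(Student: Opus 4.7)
The strategy is to reduce both bounds to statements about the Gegenbauer expansion coefficients $\lambda_{2j}$ of $h_s$, and then invoke known asymptotic estimates on those coefficients. By Lemma~\ref{lem: diagonal}, the eigenvalues of $\hat{\Gamma}_{h_s}$ acting on $R_{2k}$ are precisely the coefficients $\lambda_0,\lambda_2,\dots,\lambda_{2k}$ appearing in the normalized Gegenbauer expansion of $h_s$, and the Frobenius threshold equals
\[
\tau_{2k}(h_s)^2 = \sum_{j=0}^{k}\dim(H_{2j})\left(\frac{1}{\lambda_{2j}}-1\right)^2.
\]
So in both parts the only question is to control the quantities $|1/\lambda_{2j}-1|$ uniformly in $j\in\{0,\dots,k\}$.

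For part~(1) the plan is to compute, or at least give a sharp asymptotic expansion, of the Gegenbauer coefficients of the monomial $t^{2s}$. The standard Funk--Hecke / Rodrigues formulas express $\int_{-1}^{1} t^{2s}g_{2j}(t)w(t)\,dt$ as a ratio of Gamma functions, and dividing by $\int_S y^{2s}d\mu(y)$ fixes $\lambda_0=1$. Expanding the resulting ratio
\[
\frac{1}{\lambda_{2j}} = \frac{\Gamma(\cdots s\cdots)}{\Gamma(\cdots s+j\cdots)}\cdot(\text{normalization})
\]
in powers of $1/s$ yields $1/\lambda_{2j}-1 = \frac{j(j+\frac{n-2}{2})}{s}+O(1/s^2)$, which is exactly the asymptotic computed by Blekherman~\cite{BConv}. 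Substituting into the definition of $\tau_{2k}$ and bounding the sum by its largest term (upper bound) or its $j=1$ term (lower bound) gives both the $1/s$ lower estimate $(1+n/2)/s$ and the upper estimate $D_{2k}(k^2+kn/2)/s$, after pulling out $\dim(H_{2j})\leq D_{2k}$. The main technical obstacle here is keeping the constants clean: one must verify that the leading terms in the asymptotic of $1/\lambda_{2j}-1$ dominate uniformly in $j\leq k$ for $s$ large, which is the content of Blekherman's computation that we will cite.

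For part~(2) the plan is to invoke the Fang--Fawzi hierarchy directly. Their work constructs the optimizer $q_s$ of the program displayed in the statement and shows that the resulting eigenvalues satisfy $1-\lambda_{2j}\leq C k^2 n^2/s^2$ for every $j\leq k$, with a constant independent of $j$. Combined with $|1/\lambda_{2j}-1|\leq 2|1-\lambda_{2j}|$ once $\lambda_{2j}\geq 1/2$ (which holds for $s$ large), we get
\[
\tau_{2k}(h_s)^2 \leq \sum_{j=0}^k\dim(H_{2j})\cdot 4(1-\lambda_{2j})^2 \leq 4 D_{2k}(k+1)\bigl(Ck^2n^2/s^2\bigr)^2,
\]
and taking square roots and absorbing lower-order factors into the $O(1/s^2)$ yields the claimed bound $\tau_{2k}(h_s)\leq \sqrt{D_{2k}}\,k^2 n^2\,O(1/s^2)$. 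The main issue here is just unpacking the Fang--Fawzi notation to confirm that their per-eigenvalue estimate is exactly what we need.

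Finally, in both cases $\tau_{2k}(h_s)\to 0$ as $s\to\infty$, so Theorem~\ref{Thm: bounds}(3) applied to $(h_s)_{s\in\NN}$ (together with the existence of cubature rules of the required degrees, which will be supplied by Corollary~\ref{cor: new_quad}) gives $\overline{\bigcup_s A_s}=P_{2k}$, proving the convergence claim.
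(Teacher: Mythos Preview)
Your approach matches the paper's closely. For part~(1) the paper does exactly what you sketch: cite Blekherman's explicit formula $\lambda_{2j}^{(2s)} = s!\,\Gamma(\tfrac{2s+n}{2})\big/\bigl((s-j)!\,\Gamma(\tfrac{2s+2j+n}{2})\bigr)$, Taylor-expand $\log(1/\lambda_{2j})$ to obtain $1/\lambda_{2j}-1 = (j^2+jn/2)/s + O(1/s^2)$ (note your leading constant $j(j+(n-2)/2)$ is slightly off; at $j=1$ it should give $1+n/2$, matching the stated lower bound), and then bound the Frobenius-threshold sum below by its $j=1$ term and above by $D_{2k}$ times the $j=k$ term.

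For part~(2) the paper takes a slightly different tactical route: instead of bounding each $|1/\lambda_{2j}-1|$ separately and squaring, it proves the lemma $\tau_{2k}(h_s)\leq\sqrt{D_{2k}}\,\rho^*_{2k,s}/(1-\rho^*_{2k,s})$ via the $\ell_2\leq\ell_1$ inequality $\sqrt{\sum a_j^2}\leq\sum|a_j|$, and then invokes Fang--Fawzi's bound $\rho^*_{2k,s}\leq k^2n^2\,O(1/s^2)$ on the \emph{sum} $\sum_{j}(1-\lambda_{2j})$. Your route is equally valid and arguably more direct, but be careful with one step: what Fang--Fawzi actually control is the sum $\rho^*_{2k,s}$, not each $1-\lambda_{2j}$ individually. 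The per-eigenvalue bound you assert follows only after observing that each summand $1-\lambda_{2j}$ is nonnegative (so each term is at most $\rho^*_{2k,s}$); you should make that step explicit rather than attribute it to~\cite{FangFawzi}.
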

As the previous result shows, the choice of the polynomials $(h_s)_{s\in \mathbb{N}}$ has a significant effect on the quality of approximation of $A_s\subseteq P_{2k}$. In Section~\ref{Sec: KernelSelection} we contribute to this central issue by proving (see Theorem~\ref{Thm: Optimal_g}) that the problem of finding an optimal kernel $h$ (in the sense that $\tau_{2k}(h)$ is minimal, among all valid $h$ of degree $2s$) is a convex optimization problem over a spectrahedron and thus amenable to computation.

Furthermore in Section~\ref{Sec: optimization-free} we introduce a novel optimization-free algorithm for polynomial minimization on the sphere which arises naturally from minimizing polynomials via Harmonic Hierarchies.

In Section~\ref{Sec: HHMoments} we adopt a dual point of view and define harmonic hierarchies for moments. More precisely, by Tchakaloff's Theorem the cone $P_{2k}^*\subseteq R_{2k}^*$ dual to $P_{2k}$ captures the moments of degree $2k$ of all Borel measures on the sphere $S$ in the sense that $P_{2k}^*$ consists precisely of those linear operators $\ell: R_{2k}\rightarrow \RR$ which satisfy
\[\forall f\in R_{2k}\left(\ell(f)=\int_S f(y)d\nu(y)\right)\]
for some Borel measure $\nu$ on $S$. Our final Theorem provides {\it harmonic hierarchies for moments}, that is a sequence of polyhedra $(A_s^*)_{s\in \mathbb{N}}\subseteq R_{2k}^*$ giving a converging hierarchy of outer approximations for the cone $P_{2k}^*$ of moments. 

\begin{construction}[Outer Harmonic Hierarchies for Moments]\label{const: HH_dual}
Given:
\begin{enumerate}
\item Cubature rules $(X_{2t},W_{2t})$ for $\mu$ of algebraic degree $2t$ for every integer $t$ and
\item A sequence of univariate polynomials $(h_s(t))_{s\in \NN}$ which are nonnegative on the interval $[-1,1]$,
\end{enumerate}
define the {\it harmonic hierarchy for moments} determined by $(1)$ and $(2)$ in degree $2k$ as the sequence of polyhedra $(A_s^*)_{s\in \NN}$ where $A_s^*\subseteq R_{2k}^*$ is defined as the convex hull of the set of operators
\[L_{y}:=\left\langle \sum_{j=0}^k \lambda^{(s)}_{2j}\|x\|^{2(k-j)}\phi^{2j}_y(x),\bullet\right\rangle\]
for $y\in X_{2(k+d_s)}$ where the $\lambda_{2j}^{(s)}$ are the coefficients of $h_s$ in its Gegenbauer expansion (as in Lemma~\ref{lem: diagonal}) and $\phi_y^{2j}(x)$ is the homogeneous polynomial which represents the evaluation at $y$ (see Theorem~\ref{Thm: Zonal} for explicit formulas for $\phi_y^{2j}(x)$ in terms of Gegenbauer polynomials).
\end{construction}

Our next result summarizes the basic properties of harmonic hierarchies for moments.

\begin{theorem}\label{Harmonic_Moments}

The following statements hold:
\begin{enumerate}

\item The sets $(A_s^*)_{s\in \mathbb{N}}$ are polyhedral cones satisfying $R_{2k}^*\supseteq A_s^*\supseteq P_{2k}^*$. Furthermore $A_s^*$ is the dual cone to $A_s$.
\item If $\lim_{s\rightarrow \infty} \tau_{2k}(h_s)=0$ then the hiererachy $(A_s^*)_{s\in \mathbb{N}}$ converges to $P_{2k}^*$ in the sense that the following equality holds
\[\bigcap_{s=0}^{\infty} A_s^* = P_{2k}^*.\]
\end{enumerate}
\end{theorem}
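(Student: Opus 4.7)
The plan is to reduce Part~(1) to a cone-theoretic duality calculation identifying the $L_y$'s with the extreme rays of the dual of $A_s$, and Part~(2) to the primal convergence in Theorem~\ref{Thm: bounds}(3) combined with the general fact that dual cones reverse unions.

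For Part~(1), polyhedrality of $A_s^*$ is immediate since by construction it is the convex conic hull of finitely many operators $\{L_y : y \in X_{2(k+d_s)}\}$. To show this cone equals the dual cone of $A_s$, I would start from the factorization $A_s=\hat\Gamma_{h_s}(Q_s)$ and observe that $Q_s$ is cut out by finitely many evaluation inequalities, so $Q_s^*$ is the conic hull of the evaluation functionals $\{\operatorname{ev}_y: y\in X_{2(k+d_s)}\}$. By Lemma~\ref{lem: diagonal}, $\hat\Gamma_{h_s}$ is simultaneously diagonalized with real eigenvalues $\lambda_{2j}^{(s)}$ by the $L^2(S)$-orthogonal harmonic decomposition, so it is self-adjoint and, when each $\lambda_{2j}^{(s)}\neq 0$, invertible. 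The standard duality rule then gives
\[
A_s^* \;=\; (\hat\Gamma_{h_s}(Q_s))^* \;=\; (\hat\Gamma_{h_s}^*)^{-1}(Q_s^*) \;=\; \operatorname{cone}\bigl\{(\hat\Gamma_{h_s}^*)^{-1}(\operatorname{ev}_y): y\in X_{2(k+d_s)}\bigr\}.
\]
A direct computation, pairing this extreme ray with an arbitrary $f\in R_{2k}$ expanded in its harmonic components and invoking the reproducing property of the zonal harmonics $\phi_y^{2j}$ (Theorem~\ref{Thm: Zonal}) together with the spectrum of $\hat\Gamma_{h_s}$ from Lemma~\ref{lem: diagonal}, then identifies each extreme ray with the operator $L_y$ of Construction~\ref{const: HH_dual} up to a positive normalization. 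Once this is in place, the inner inclusion $P_{2k}^*\subseteq A_s^*$ is the dual of $A_s\subseteq P_{2k}$ (Theorem~\ref{Thm: bounds}(1)), and the outer inclusion $A_s^*\subseteq R_{2k}^*$ is automatic.

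For Part~(2), by the definition of dual cone $\ell\in\bigcap_s A_s^*$ if and only if $\ell\geq 0$ on every $A_s$, equivalently on $\bigcup_s A_s$, and by continuity of linear functionals in finite dimension, if and only if $\ell\geq 0$ on $\overline{\bigcup_s A_s}$. Under the hypothesis $\lim_s \tau_{2k}(h_s)=0$, Theorem~\ref{Thm: bounds}(3) identifies this closure with $P_{2k}$, so $\ell\in\bigcap_s A_s^*$ if and only if $\ell\in P_{2k}^*$, completing the proof.

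The main obstacle is the explicit extreme-ray identification in Part~(1): one must keep careful track of the normalizations for the Gegenbauer polynomials, the zonal harmonics and the $L^2$ pairing so that the output of the formal cone-duality rule coincides with the operator $L_y$ as written. The remaining steps are essentially formal consequences of convex duality together with the primal bounds already proved in Theorem~\ref{Thm: bounds}.
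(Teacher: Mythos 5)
Your approach coincides with the paper's (which proves this as part of Theorem~\ref{Thm: HHMoments} and then cites it): identify $Q_s^*$ with the conic hull of the point evaluations at the cubature nodes, transport through the self-adjoint, invertible map $\hat\Gamma_{h_s}$ via the duality rule $(T(C))^*=(T^*)^{-1}(C^*)$ and bi-duality, and obtain Part~(2) from the primal convergence in Theorem~\ref{Thm: bounds}(3) together with the fact that taking dual cones converts $\overline{\bigcup_s A_s}$ into $\bigcap_s A_s^*$.

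One substantive caveat on the final step of Part~(1). Carrying out the computation you describe, $(\hat\Gamma_{h_s}^*)^{-1}(\operatorname{ev}_y)$ is the functional $f\mapsto\hat\Gamma_{h_s}^{-1}(f)(y)=\sum_{j}(\lambda_{2j}^{(s)})^{-1}f_{2j}(y)$, whose Riesz representative in $R_{2k}$ is $\sum_j (\lambda_{2j}^{(s)})^{-1}\|x\|^{2(k-j)}\phi_y^{2j}(x)$. The coefficients are the reciprocals $1/\lambda_{2j}^{(s)}$, not $\lambda_{2j}^{(s)}$ as Construction~\ref{const: HH_dual} displays. This is a componentwise inversion, not a global ``positive normalization,'' so the hedge in your last paragraph will not absorb it. The discrepancy traces to the paper's own stated formula for $L_y$: using that formula gives $L_y(f)=\hat\Gamma_{h_s}(f)(y)$, hence $\operatorname{Conv}(\{L_y\})^*=\hat\Gamma_{h_s}^{-1}(Q_s)$, which equals $A_s=\hat\Gamma_{h_s}(Q_s)$ only in degenerate cases. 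Your direct duality calculation is the correct way to pin down the normalization, but you should be prepared for the answer to differ from the paper's displayed formula in exactly this way.
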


Finally in Section~\ref{Sec:Julia} we introduce our Julia package for Harmonic Hierarchies (available at~\url{github}) and show some simple computational results obtained with it. We showcase our ``optimization-free" algorithm for polynomial minimization on the sphere via harmonic hierarchies and verify that its practical behavior is similar to what our theory predicts.  Applications of Theorem~\ref{Harmonic_Moments} and the extension of our package for solving problems expressible via the method of moments will be the object of upcoming subsequent work.

\subsection{Relationship with previous work.} 
The notion that cubature rules should play a useful role in polynomial optimization appears in~\cite{Piazzon-Vianello-Q, Piazzon-Vianello-CGrid} where the authors propose constructing upper bounds for the minimum value $\alpha^*$ of a polynomial by evaluating it at the nodes of a cubature rule. It is shown in~\cite{Piazzon-Vianello-Q} that this ``optimization-free" approach is at least as good as the SDP approach proposed in~\cite{LOther} for polynomial optimization (see Remark~\ref{Rem:cubature} for details). In the language of this article, their work proposes an outer hierarchy of approximation for $P_{2k}$ via the polyhedra $Q_s$ defined in Construction~\ref{Const: HH}. By contrast, our work provides {\it inner} approximations for $P_{2k}$ providing {\it lower bounds} on the minima of polynomials as well as a novel optimization-free approach (see Section~\ref{Sec: optimization-free}). Lower bounds on $\alpha^*$ are typically harder to obtain and more valuable since they involve proving a statement with a universal quantifier.

The results of Fang and Fawzi in~\cite{FangFawzi} are the best estimates that are currently available on the speed of convergence of the sum-of-squares hierachy for polynomial optimization on the sphere. In this article we show that the exact same bounds apply to our {\it linear approximation hierarchies} and provide novel quantitative convergence bounds which depend on more readily computable quantities. It would be interesting to extend harmonic hierarchies to other spaces such as the hypercube, the ball and the simplex for which we have natural measures and explicit formulas for the reproducing kernel leveraging the ideas of Slot-Laurent~\cite{LaurentSlot} and Slot~\cite{Slot}. 

In~\cite{Alperen} Erg\"ur constructs {\it random} polyhedral approximation hierarchies for the cone of nonnegative polynomials. More precisely, the author builds a family of random polytopes which approximates the cone of nonnegative polynomials lying in a given subspace $E$ within a specified scaling constant {\it with high probability} (see ~\cite[Corollary 6.5]{Alperen} for precise statements). Remarkably, the author shows that the number of facets in such approximations depends explicitly on the dimension of the subspace and can be much better for sparse nonnegative polynomials than for abitrary nonnegative polynomials. While our approximation hierarchies are deterministic and explicit they do not take into account the sparsity structure of our target polynomials. Developing an extension of harmonic hierarchies which can incorporate sparsity is an interesting open problem.

{\bf Acknowledgments.} We wish to thank Greg Blekherman for many stimulating conversations which motivated us to pursue this work. We thank Alex Towsend for pointing us to recent ideas on Gaussian quadrature computation and their high quality implementations. We thank Monique Laurent, Lucas Slot and Alperen Erg\"ur for various references and useful feedback on earlier versions of the results contained in this article.

\section{Cubature formulas}\label{Sec:cubature}

By a {\it cubature formula of algebraic degree $2t$} for $\mu$ on $S\subseteq \RR^n$ we mean a pair $(X,W)$ where $X\subseteq S$ is a finite set and $w:X\rightarrow \RR_{>0}$ is a function with strictly positive values which satisfy the equality
\[ \int_S f(y)d\mu(y) = \sum_{x\in X} W(x) f(x) \]
for every homogeneous polynomial (i.e., form) $f\in R_{2t}$.

The main invariant of a cubature formula is its {\it size} $|X|$. From Caratheodory's Theorem we know that there exist cubature rules of strength $2t$ of size at most $\binom{2t+n-1}{t}+1$ and it is easy to see that no cubature formula of strength $2t$ and size less than $\binom{t+n-1}{t}$ can exist, since otherwise the square of a form vanishing at all points of $X$ would fail to satisfy the equality above (this lower bound is known to be strict on the sphere if $n,t>2$~\cite{Taylor}). Despite a very significant amount of work (see for instance the surveys~\cite{Stroud,Cools1,Cools2}) and the fact that such formulas could have a wealth of applications no general formula is known for producing cubature rules of given weight and (provably) minimal size on the sphere (see~\cite[pg. 294-303]{Stroud} for formulas in some special cases).

\subsection{An explicit cubature rule for spheres}\label{Sec: Gauss-product}

In this section we give explicit cubature rules of arbitrary even algebraic degrees on the sphere $S\subseteq \RR^n$. We will use well-known formulas of Gauss-product type~\cite[pg.40-43]{Stroud} for which we include a self-contained treatment for the reader's benefit. Such product formulas can be combined with recent ideas on fast Gauss-Jacobi quadrature computation~\cite{Hale-Townsend} to produce highly accurate cubature rules very efficiently. Such rules are key components in our implementation of harmonic hierarchies (see Section~\ref{Sec:Julia}).

We denote the points of $\RR^{n}$ by pairs $(s,\zeta)\in \RR\times \RR^{n-1}$. Recall~\cite[Theorem A.4, pg.242]{Axler} that if $f$ is an integrable, Borel-measurable function on the sphere $S^{n-1}\subseteq \RR^{n}$ then the following equality holds:
\begin{equation}
\label{eqn: int}
\int_{S^{n-1}}fd\mu = \int_{-1}^1\left(1-s^2\right)^{\frac{n-3}{2}} \left(\int_{S^{n-2}} f\left(s, \sqrt{1-s^2}\zeta\right)d\mu(\zeta)\right) ds
\end{equation}

We will use the product structure of formula~(\ref{eqn: int}) to inductively construct explicit cubature rules on spheres of every dimension and even strength which are invariant under sign changes. Recall that the group of sign changes in $\RR^n$ consists of linear transformations $T:\RR^n\rightarrow \RR^n$ which send $(x_1,\dots, x_n)$ to $(\epsilon_1x_1,\dots, \epsilon_nx_n)$ with $\epsilon_i\in \{-1,1\}$ and $i=1,\dots, n$. A cubature rule $(X,W_X)$ on $S^{n-1}$ is invariant under sign changes if for every $x\in X$ and every sign change $g$ we have $gx\in X$ and $W(gx)=W(x)$. An important ingredient of the construction will be the Gauss-Jacobi quadrature rules on the interval $[-1,1]$ for a given weight function $w(y)=(1+y)^{\alpha}(1-y)^\beta$ so we begin by recalling their definition. If $\alpha,\beta>-1$ are given and $X:=\{x_1,\dots, x_t\}\subseteq [-1,1]$ is the set of roots of the Jacobi polynomial $P_t^{(\alpha,\beta)}(x)$ then there exists an explicit function $W: X\rightarrow \RR_+$ (see~\cite[pg. 352]{Szego} or~\cite[1.4]{Hale-Townsend} for an explicit formula) such that the equality
\[\int_{-1}^1 f(y)w(y) dy = \sum_{x\in X} W(x)f(x)\]
hols for every univariate polynomial $f(t)$ of degree $2t-1$ or less. 

\begin{construction} \label{Const: Spherequad} Suppose that $(Y,W_Y)$ is a cubature on $S^{n-2}$ and that $(Z,W_Z)$ is a Gaussian quadrature rule for the weight function $w(s)=(1-s^2)^{\frac{n-3}{2}}$ on $[-1,1]$. Define the pair $(X,W_X)$ on $S^{n-1}$ via the formulas: 
\[X=\left\{\left(z, \sqrt{1-z^2}y\right): (z,y)\in Z\times Y\right\}\]
\[W_X\left(z, \sqrt{1-z^2}y\right) :=W_Z(z)W_Y(y)\]
\end{construction}
The following Theorem summarizes the main properties of this construction
\begin{theorem} If $(Y,W_Y)$ and $(Z,W_Z)$ have algebraic degree $2t$ and $(Y,W_Y)$ is invariant under sign changes then the pair $(X,W_X)$ is a cubature rule of algebraic degree $2t$ in $S^n$ which is invariant under sign changes. Furthermore $|X|=|Z||Y|$.
\end{theorem}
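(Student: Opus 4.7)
The plan is to verify the three claims of the theorem---algebraic degree, sign-change invariance, and cardinality---in turn, leveraging the product structure of equation~(\ref{eqn: int}) together with the sign-change invariance hypothesis on $(Y,W_Y)$.

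For the cardinality, I would observe that the Gauss--Jacobi nodes $Z$ are the roots of the Jacobi polynomial $P_t^{((n-3)/2,(n-3)/2)}$, so they lie strictly in the open interval $(-1,1)$ and $\sqrt{1-z^2}\neq 0$ for every $z\in Z$. Hence the parameterization $(z,y)\mapsto(z,\sqrt{1-z^2}\,y)$ is injective and $|X|=|Z|\cdot |Y|$.

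The substance of the proof is the algebraic degree. By linearity it suffices to check the cubature identity on an arbitrary monomial $f(s,\zeta)=s^{a}\zeta^{b}$ of total degree $a+|b|=2t$ (with $s\in\RR$, $\zeta\in\RR^{n-1}$). Substituting into the definition of $(X,W_X)$ factors the discrete sum as
\[
\Big(\sum_{z\in Z} W_Z(z)\,z^{a}(1-z^{2})^{|b|/2}\Big)\Big(\sum_{y\in Y} W_Y(y)\,y^{b}\Big),
\]
while applying (\ref{eqn: int}) to $\int_{S^{n-1}} f\,d\mu$ yields the product of integrals
\[
\Big(\int_{-1}^{1}(1-s^{2})^{(n-3)/2}s^{a}(1-s^{2})^{|b|/2}\,ds\Big)\Big(\int_{S^{n-2}}\zeta^{b}\,d\mu(\zeta)\Big).
\]
I would then split into two cases on the parity of $b$. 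If some component of $b$ is odd, both second factors vanish---the discrete one by sign-change invariance of $Y$, the integral one by sign-change invariance of $\mu$ on $S^{n-2}$---so both sides are zero. If every component of $b$ is even, then $(1-s^{2})^{|b|/2}$ is polynomial in $s^{2}$ and $s^{a}(1-s^{2})^{|b|/2}$ is a univariate polynomial of degree $a+|b|=2t$, which the Gauss--Jacobi rule $(Z,W_Z)$ of algebraic degree $2t$ integrates exactly; simultaneously $|b|\leq 2t$ lets $(Y,W_Y)$ reproduce $\int_{S^{n-2}}\zeta^{b}\,d\mu$ exactly. Matching the two factorizations closes the argument.

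Finally, for sign-change invariance of $(X,W_X)$, I would use that the Jacobi weight $(1-s^{2})^{(n-3)/2}$ is even, so $Z=-Z$ and $W_Z(-z)=W_Z(z)$; combined with the invariance of $Y$, any sign change $\epsilon=(\epsilon_{1},\epsilon')\in\{\pm 1\}^{1+(n-1)}$ maps $(z,\sqrt{1-z^{2}}\,y)$ to $(\epsilon_{1}z,\sqrt{1-(\epsilon_{1}z)^{2}}\,\epsilon'y)\in X$ with the same weight. The main obstacle is the odd-parity case in the cubature check: without sign-change invariance of $Y$, the non-polynomial factor $(1-s^{2})^{|b|/2}$ with $|b|$ odd would fall outside the reach of Gauss--Jacobi quadrature, so the hypothesis is not merely convenient but essential.
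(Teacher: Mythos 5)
Your proof is correct and follows essentially the same approach as the paper's: the product structure of equation~(\ref{eqn: int}), Gauss--Jacobi quadrature in the $s$-coordinate combined with the sphere cubature in $\zeta$, and a parity split on the monomial exponents. The only differences are organizational---you split on the parity of $b$ alone (letting $s^a(1-s^2)^{|b|/2}$ be a degree-$2t$ polynomial whenever $|b|$ is even, regardless of the parity of $a$), whereas the paper first proves sign-change invariance of $(X,W_X)$ and then uses it to reduce to the case where $a_1$ and all $b_i$ are even; you also spell out the $|X|=|Z|\,|Y|$ count, which the paper leaves implicit.
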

\begin{proof} Since the Jacobi polynomials satisfy the symmetry relation $P_t^{(\alpha,\beta)}(-z)=(-1)^tP_t^{(\beta,\alpha)}(-z)$ and we are in the $\alpha=\beta$ case we conclude that the nodes of the Gaussian cubature $(Z,W_Z)$ are closed under multiplication by $(-1)$. Furthermore the equality $W_Z(-x_j)=W_Z(x_j)$ holds because the explicit formula for the Gaussian cubature weights from~\cite[pg. 352]{Szego} depends on the value of the derivative only through its square. We conclude that $(X,W_X)$ is invariant under sign change of the first component. Furthermore if $g$ is the transformation changing the sign of any component with index at least two then $g(z,\sqrt{1-z^2}y)=(z,\sqrt{1-z^2}g(y))$. Since $Y$ is invariant under sign changes we conclude that $(z,\sqrt{1-z^2}g(y))$ lies in $X$ and furthermore we know $W_Y(y)=W_Y(g(y))$ which implies that $W_X(z,\sqrt{1-z^2}g(y)) = W_X(z,\sqrt{1-z^2}y)$ as claimed.  Now suppose $f(s,\zeta)=s^{a_1}\zeta_1^{b_1}\dots \zeta_{n-1}^{b_{n-1}}$ is a monomial of degree $2t$. If $a_1$ or some $b_i$ is odd then the integral and the cubature rule $(X,W_X)$ have both value zero because the integrand gets multiplied by minus one by the sign change of the coordinate which appears with odd exponent. Thus it suffices to prove the claim for monomials all of whose exponents are even. More precisely suppose $f(s,\zeta)=s^{2a_1}\zeta_1^{2b_1}\dots \zeta_{n-1}^{2b_{n-1}}$ with $2a_1+ 2b_1+\dots + 2b_{n-1}= 2t$.
Now $f(s,\zeta\sqrt{1-s^2})= s^{2a_1}(1-s^2)^{b_1+\dots+b_n} \zeta_1^{2b_1}\dots \zeta_{n-1}^{2b_{n-1}}$. Since as functions on $S^{n-1}$
\[\zeta_1^{2b_1}\dots \zeta_{n-1}^{2b_{n-1}}=\|(\zeta_1,\dots,\zeta_{n-1})\|_2^{2a_1}\zeta_1^{2b_1}\dots \zeta_{n-1}^{2b_{n-1}}\]
and the right-hand side has degree $2t$ we can use the cubature rule $(Y,W_Y)$ to conclude that for every $s\in [-1,1]$
\[\int_{S^{n-2}} f\left(s, \sqrt{1-s^2}\zeta\right)d\mu(\zeta) =  s^{2a_1}(1-s^2)^{b_1+\dots+b_n} \sum_{y\in Y} W_Y(y) y_1^{2b_1}\dots y_n^{2b_n}\]
By integrating with respect to $s$ and using the fact that $(Z,W_Z)$ is a Gaussian cubature rule for polynomials of degree $t$ or less with respect to the weight function $\left(1-s^2\right)^{\frac{n-3}{2}}$ we conclude that

\[\int_{-1}^1\left(1-s^2\right)^{\frac{n-3}{2}} \left(\int_{S^{n-1}} f\left(s, \sqrt{1-s^2}\zeta\right)d\mu_{n-1}(\zeta)\right) ds =\]
\[= \int_{-1}^1\left(1-s^2\right)^{\frac{n-3}{2}} s^{2a_1}(1-s^2)^{b_1+\dots+b_n}\sum_{y\in Y} W_Y(y) y_1^{2b_1}\dots y_n^{2b_n}ds = \]
\[=\sum_{z\in Z}W_Z(z) z^{2a_1}(1-z^2)^{b_1+\dots+b_n} \sum_{y\in Y} W_Y(y) y_1^{2b_1}\dots y_n^{2b_n} =\]
\[=\sum_{y\in Y} \sum_{z\in Z} W_Z(z)W_Y(y) y_1^{2b_1}\dots y_n^{2b_n}z^{2a_1}(1-z^2)^{b_1+\dots+b_n}=\]
\[=\sum_{y\in Y} \sum_{z\in Z} W_Z(z)W_Y(y) f\left(z, \sqrt{1-z^2}y\right)\]
Using Equation~(\ref{eqn: int}) we conclude that for every polynomial of degree $2t$ the equality
\[\int_{S^{n-1}} f d\mu = \sum_{x\in X} W_X(x) f(x)\]
holds as claimed.
\end{proof}

Using the construction iteratively, starting from the cubature rule on the circle $S\subseteq \RR^2$ given by the vertices of a polygon with $2(t+1)$ sides and equal weights we prove

\begin{corollary} \label{cor: new_quad} Construction~\ref{Const: Spherequad} defines a cubature rule of algebraic degree $2t$ consisting of $2(t+1)^{n-1}$ points on the sphere $S\subseteq \RR^n$.
\end{corollary}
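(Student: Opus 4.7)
The plan is to proceed by induction on $n$, using the previous Theorem as the inductive step and establishing the $n=2$ case by direct analysis of the regular polygon on the circle. The key arithmetic to track is that each application of Construction~\ref{Const: Spherequad} multiplies the node count by the size of a one-dimensional Gauss-Jacobi rule with Gegenbauer weight, and that this one-dimensional rule must itself have algebraic degree $2t$.

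For the base case on $S^{1}\subseteq \RR^{2}$, I would take the $2(t+1)$ vertices of a regular polygon placed at angles $\theta_{k}=\pi k/(t+1)$, $k=0,1,\dots,2t+1$, with equal weights $\mu(S^{1})/(2(t+1))$. Parameterizing $S^{1}$ by arclength converts any monomial of degree $2t$ in $(x,y)$ into a trigonometric polynomial of degree $\le 2t$, and the trapezoidal rule with $2(t+1)$ equally spaced nodes integrates trigonometric polynomials of degree $\le 2t+1$ exactly, so the rule has algebraic degree $2t$. The placement at angles $\pi k/(t+1)$ makes the node set invariant under the three nontrivial sign changes (the reflection $(x,y)\mapsto(-x,y)$ sends $\theta_{k}$ to $\theta_{t+1-k}$, the reflection $(x,y)\mapsto(x,-y)$ sends $\theta_{k}$ to $\theta_{-k\bmod 2(t+1)}$, and the composition is $\theta_{k}\mapsto \theta_{k}+\pi$), and the uniform weights make the rule sign-change invariant. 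This gives $2(t+1)=2(t+1)^{2-1}$ nodes, matching the claim for $n=2$.

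For the inductive step, assume we have a sign-change invariant cubature rule $(Y,W_{Y})$ of algebraic degree $2t$ on $S^{n-2}$ with $|Y|=2(t+1)^{n-2}$ nodes. Let $(Z,W_{Z})$ be the Gauss-Jacobi quadrature for the weight $(1-s^{2})^{(n-3)/2}$ on $[-1,1]$ using $t+1$ nodes; this rule has algebraic degree $2(t+1)-1=2t+1\ge 2t$, so it satisfies the hypothesis of the preceding Theorem. Applying that Theorem produces a cubature $(X,W_{X})$ on $S^{n-1}$ of algebraic degree $2t$, invariant under sign changes, with
\[
|X|=|Z|\cdot|Y|=(t+1)\cdot 2(t+1)^{n-2}=2(t+1)^{n-1}.
\]
Iterating this construction $n-2$ times from the base case produces the desired rule on $S^{n-1}$ with the claimed number of nodes.

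The only delicate point, and the one I expect to need the most care, is establishing sign-change invariance in the base case: one must verify that the specific angular placement $\theta_{k}=\pi k/(t+1)$ is closed under all of the sign changes of $\RR^{2}$, which is why the polygon is taken with an even number of sides $2(t+1)$ rather than any convenient larger number. Everything else is a matter of checking that the node counts multiply correctly and that the one-dimensional Gauss-Jacobi rule with $t+1$ nodes has high enough algebraic degree.
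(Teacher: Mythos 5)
Your proof is correct and follows precisely the route the paper takes, namely iterating Construction~\ref{Const: Spherequad} starting from the $2(t+1)$-gon rule on the circle; the paper states this in a single sentence and leaves the base case, the sign-change invariance of the polygon rule, and the $(t+1)$-node count of the Gauss--Jacobi step unverified, all of which you check correctly.
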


\begin{remark} \label{Rem:cubature} Having explicit cubature rules gives a useful procedure for estimating minima of polynomials. As shown in the work of Piazzon et al.~\cite{Piazzon-Vianello-Q}, by letting $\alpha_j^{\rm quad}:=\min_{x\in X_j}f(x)$ be the minimum over the nodes of increasing cubature rules of algebraic degree $j$ we obtain a sequence which approaches $\alpha^*$. 
To see this, recall from~\cite{LOther} that the sequence of minima of the semidefinite programs 
\[\beta_t:=\inf\left\{ \int f(x)g(x)d\mu: \int g(x)d\mu=1\text{ and $g(x)$ is SOS of polys. of degree $t$}\right\}\]
converges to $\alpha^*$ and note that if $k={\rm deg}(f)+t$ then
\[\int f(x)g(x)d\mu = \sum_{z\in X_k}f(z)g(z)\geq \alpha^{\rm quad}_{k}\sum_{z\in X_k} g(z) =\alpha^{\rm quad}_{k}\int g(z)d\mu(z) = \alpha^{\rm quad}_{k}\]
so $\beta_t\geq \alpha^{\rm quad}_{k}\geq \alpha^*$, the $\alpha_k^{\rm quad}$ converge to the optimum at least as fast as the $\beta_t$ and in particular $\alpha^{\rm quad}_{k}-\alpha^*=O(1/k^2)$ by results of De Klerk, Laurent and Zhao~\cite{dKLZ}.
\end{remark}

\section{Harmonic analysis on spheres}
\label{Sec: Harmonic}
\subsection{Reproducing Kernels for spaces of functions on the sphere}
\label{Sec:RKHS}
Suppose that $\calF$ is a finite-dimensional vector space of continuous real-valued functions on the sphere $S\subseteq \RR^n$ and let $\mu$ be the $(n-1)$-dimensional volume measure. The inner product 
\[\langle f,g\rangle:=\int_{S} f(y)g(y) d\mu(y)\] 
makes $\calF$ into a Hilbert space. Every point $x\in S$ defines a linear evaluation map $ev_x: \calF\rightarrow \RR$ which sends a function $f$ to its value $f(x)$ at $x$. Since $\calF$ is a Hibert space the evaluation map is represented by a unique element $\phi_x\in \calF$, meaning that $\forall f \in \calF\left(f(x)= \langle f,\phi_x\rangle\right)$.  The {\it Christoffel-Darboux kernel} (or reproducing kernel) of the Hilbert space $\calF$ is the function $K_{\calF}: S\times S\rightarrow \RR$ given by \[K_{\calF}(x,y)=\langle \phi_x,\phi_y\rangle = \phi_x(y)=\phi_y(x).\]
The following basic Lemma summarizes its main properties:
\begin{lemma}\label{lem: easy_RKHS} The following statements hold for every $x,y\in S$:
\begin{enumerate}
\item The function $K_{\calF}(x,y)$ is symmetric (i.e. $K_{\calF}(x,y)=K_{\calF}(y,x)$) and for every finite collection $x_1,\dots, x_M$ of points of $S$ the matrix $K_{\calF}(x_i,x_j)$ is positive semidefinite. 
\item $K_{\calF}(x,y)$ has the following {\it reproducing property}
\[\forall f\in \calF \forall x\in S \left(f(x)= \int_S K_{\calF}(x,y)f(y)d\mu(y)\right)\]
and furthermore this property specifies $K_{\calF}(x,y)$ uniquely.
\item If $(e_i(x))_i$ is any orthonormal basis for $\calF$ then $K_{\calF}(x,y) = \sum_j e_j(x)e_j(y)$. In particular the equality $\int_{S} K(x,x)d\mu(x)=\dim(\calF)$ holds.
\end{enumerate}
\end{lemma}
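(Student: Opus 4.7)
The plan is to derive all three items directly from the Riesz-representation definition of $\phi_x$, which says that $f(x) = \langle f, \phi_x\rangle$ for every $f \in \calF$. Since $\calF$ is finite-dimensional, everything reduces to standard Hilbert-space manipulations together with the tautological identities $K_\calF(x,y) = \langle \phi_x, \phi_y\rangle = \phi_x(y) = \phi_y(x)$.

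For (1), symmetry is immediate from the symmetry of the inner product. To verify positive semidefiniteness, given points $x_1,\dots,x_M \in S$ and scalars $c_1,\dots,c_M \in \RR$, I would expand
\[\sum_{i,j} c_i c_j K_\calF(x_i, x_j) = \Big\langle \sum_i c_i \phi_{x_i}, \sum_j c_j \phi_{x_j}\Big\rangle = \Big\|\sum_i c_i \phi_{x_i}\Big\|^2 \geq 0.\]
For (2), the reproducing identity reads $\int_S K_\calF(x,y) f(y) d\mu(y) = \langle \phi_x, f\rangle$, which equals $f(x)$ by the defining property of $\phi_x$. For uniqueness I would restrict to candidate kernels $K'$ with $K'(x,\cdot) \in \calF$ for each fixed $x$; then the reproducing identity specifies all inner products of $K_\calF(x,\cdot) - K'(x,\cdot) \in \calF$ against every $f \in \calF$, and testing against $f := K_\calF(x,\cdot) - K'(x,\cdot)$ itself forces the norm to vanish.

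For (3), given an orthonormal basis $(e_j)$ of $\calF$, expand $\phi_x = \sum_j \langle \phi_x, e_j\rangle e_j = \sum_j e_j(x)\, e_j$, where the coefficients are extracted by evaluating each basis vector at $x$ through the reproducing property. Substituting $y$ gives $K_\calF(x,y) = \phi_x(y) = \sum_j e_j(x) e_j(y)$. Setting $x=y$ and integrating, orthonormality of the $e_j$ then yields $\int_S K_\calF(x,x) d\mu(x) = \sum_j \|e_j\|^2 = \dim(\calF)$.

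The main obstacle, such as it is, lies in the uniqueness clause of part (2): without the implicit convention that $K'(x,\cdot)$ must lie in $\calF$, uniqueness fails, since adding any function $L^2$-orthogonal to $\calF$ in the second variable leaves the reproducing identity intact. Once this convention is made explicit, the remainder of the lemma is a compact exercise in Riesz duality for a finite-dimensional Hilbert space of continuous functions, requiring no nontrivial analytical input beyond the existence of the representatives $\phi_x$.
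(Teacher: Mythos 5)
Your proof is correct, and since the paper presents this as a ``basic Lemma'' without supplying its own argument, the Riesz-representation approach you use is exactly what is intended: all three items fall out of the identities $K_\calF(x,y)=\langle\phi_x,\phi_y\rangle=\phi_x(y)=\phi_y(x)$ combined with elementary Hilbert-space facts. Your remark about the uniqueness clause in (2) is a legitimate and worthwhile precision: the reproducing identity only pins down the kernel among functions with $K'(x,\cdot)\in\calF$ for each fixed $x$, which is implicit in the paper's definition of $K_\calF$ via $\phi_x\in\calF$ but is not spelled out there.
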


In this Section we will describe some distinguished subspaces of functions on the sphere and give explicit formulas for their reproducing kernels.

\subsection{Harmonic decomposition on spheres}
The orthogonal group $G:=SO(n)$ acts on $\RR^n$ by left multiplication and on the ambient polynomial ring $R$ via the resulting contragradient action defined by $\rho^*(g)(f)(x):=f(g^{-1}(x))$. This action respects multiplication and preserves the graded components $R_j$ of $R$. The decomposition of each graded component into $SO(n)$-irreducible subrepresentations is well understood (see~\cite[Theorem 3.1]{Helgason}). For each integer $k$ we have
\[R_{2k} =\bigoplus_{j=0}^k \left(\|x\|^{2(k-j)} H_{2j}\right)\]
where $H_{2j}\subseteq R_{2j}$ is the subspace consisting of homogeneous harmonic polynomials of degree $2j$ (i.e. forms $F$ of degree $2j$ satisfying $\Delta F=0$  where $\Delta = \sum_{i=1}^n\frac{\partial^2}{\partial x_i^2}$ is the laplacian operator). The $H_{2j}$ are pairwise non-isomorphic irreducible representations of $SO(n)$ and as a result, a homogeneous polynomial $f\in R_{2k}$ has a unique {\it harmonic decomposition} 
\[ f= \|x\|^{2k}f_0 + \|x\|^{2(k-1)}f_2 + \|x\|^{2(k-2)}f_4 +\dots+ f_{2k}\] 
with $f_{2j}\in H_{2j}$ for $j=0,1,\dots,k$ (see~\cite[Theorem 5.7]{AAA} for an elementary proof of the existence of this decomposition).
In particular the following equalities hold \[{\rm dim}(H_{2j})={\rm dim}(R_{2j})-{\rm dim}(R_{2(j-1)})=\binom{n+2j-1}{2j}-\binom{n+2j-3}{2j-2}.\] 

\subsection{Reproducing kernels for spaces of harmonic polynomials}
 
If $H_j$ is the subspace of homogeneous polynomials of degree $j$ restricted to $S$ and $y$ is any point of $S$ then the evaluation map $ev_y: H_j\rightarrow \RR$ is fixed by the subgroup $G_y\leq SO(n)$ consisting of those rotations which fix $y$. As a result the harmonic polynomial $\phi_y^{(j)}$ which represents this evaluation on $H_j$ (i.e. which satisfies $\langle f,\phi_y^{(j)}\rangle = f(y)$ for every $f\in H_j$) is fixed under the action of $G_y$ and satisfies the normalizing property appearing in Lemma~\ref{lem: easy_RKHS} part $(3)$. These properties determine the polynomial $\phi_y^j$ uniquely and allow us to obtain an explicit formula in terms of {\it Gegenbauer polynomials}, whose definition we now recall. If $S\subseteq \RR ^n$ and $n\geq 3$ we let $\alpha:=\frac{n-2}{2}$ and define the $j$-th Gegenbauer polynomial $C_j^{(\alpha)}(t)$ recursively by the formulas
\[C_0^{(\alpha)}(t)=1\text{ , }C_1^{(\alpha)}(t)=2\alpha t\text{ and }\]
\[C_j^{(\alpha)}(t)= \frac{1}{j}\left[2t(j+\alpha -1)C_{j-1}^{(\alpha)}(t)-(j+2\alpha-2)C^{(\alpha)}_{j-2}(t)\right]\text{ if $j\geq 3$}.\]

The following Theorem gives formulas for the reroducing kernels on the spaces $H_j$. We provide a sketch of a proof because the argument is simple and beautiful (see~\cite[Theorem 2.24]{Morimoto} for details) and provides a natural motivation for the definition of Gegenbauer polynomials. 

\begin{theorem} \label{Thm: Zonal} For each $y\in S$ and nonnegative integer $j$ there exists a unique polynomial $\phi_y^{j}(x)\in R$ satisfying the following conditions:
\begin{enumerate}
\item $\phi_y^j$ is homogeneous of degree $j$ and harmonic.
\item $\phi_y^j$ is fixed by the action of the stabilizer subgroup $G_y\subseteq O(n)$.
\item $\phi_y^j(y)=\frac{{\rm dim}(H_j)}{\mu(S)}$ 
\end{enumerate}
Furthermore $\phi_y^j$ represents the evaluation at $y$ on $H_j$ and is given, in terms of Gengenbauer polynomials, by the formula
\[\phi_y^j(x)=\frac{{\rm dim}(H_j)}{\mu(S)C_j^{(\alpha)}(1)}\|x\|^{j}C_j^{(\alpha)}\left(\left\langle\frac{x}{\|x\|},y\right\rangle\right)\]
\end{theorem}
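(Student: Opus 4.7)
The plan is to construct $\phi_y^j$ as the Riesz representer of the evaluation functional at $y$ on $H_j$, establish properties (1)--(3) and uniqueness from symmetry, and then derive the closed form by reducing to a one-variable ODE and identifying it as the Gegenbauer equation.

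First I would use that $H_j$ is a finite-dimensional Hilbert space under the $L^2$ inner product, so the linear functional $\mathrm{ev}_y \colon H_j \to \RR$ has a unique representer $\phi_y^j \in H_j$ satisfying $\langle f, \phi_y^j\rangle = f(y)$; this element is automatically a homogeneous harmonic polynomial of degree $j$, giving (1), and represents evaluation at $y$ on $H_j$ by construction. For any $g \in G_y$ the pullback action on $H_j$ preserves $\langle\cdot,\cdot\rangle$ and fixes $\mathrm{ev}_y$ (since $gy = y$), so uniqueness of the Riesz representer forces $\phi_y^j$ to be $G_y$-invariant, giving (2). For (3), observe that $K_{H_j}(x,x) = \phi_x^j(x)$ is $SO(n)$-invariant in $x$ (because $SO(n)$ permutes the representers via $\phi_{gx}^j = g \cdot \phi_x^j$), hence constant on $S$ by transitivity; integrating and using part (3) of Lemma~\ref{lem: easy_RKHS} gives $\mu(S)\,\phi_y^j(y) = \dim H_j$.

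Next I would reduce to the case $y = e_n$ by $SO(n)$-equivariance. The stabilizer $G_{e_n}$ acts as $SO(n-1)$ on the first $n-1$ coordinates, so a $G_{e_n}$-invariant polynomial is a polynomial in $x_n$ and $r^2 = x_1^2 + \cdots + x_{n-1}^2$; homogeneity of degree $j$ then forces
\[p(x) = \sum_{0 \le k \le j/2} c_k\, x_n^{j-2k}\, r^{2k}.\]
Imposing $\Delta p = 0$ yields a two-term recurrence among the $c_k$ that pins them all down in terms of $c_0$, giving a one-dimensional space of $G_{e_n}$-invariant harmonics (which, with the normalization (3), establishes uniqueness of $\phi_y^j$ among polynomials satisfying (1)--(3)). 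On the unit sphere $r^2 = 1 - x_n^2$, so $p|_S$ becomes a polynomial in $t := x_n = \langle x, e_n\rangle$ of degree $j$, and the relation $\Delta p = 0$ translates (via the decomposition $\Delta = \partial_r^2 + \tfrac{n-1}{r}\partial_r + \tfrac{1}{r^2}\Delta_S$ and the eigenvalue $-j(j+n-2)$ of the spherical Laplacian on $H_j$) into the Gegenbauer equation
\[(1-t^2)\,q''(t) - (n-1)\, t\, q'(t) + j(j+n-2)\, q(t) = 0,\]
whose unique-up-to-scalar polynomial solution of degree $j$ is $C_j^{(\alpha)}(t)$ with $\alpha = (n-2)/2$. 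Re-homogenizing from the sphere back to $\RR^n$ introduces the factor $\|x\|^j$.

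Finally, the overall scalar is fixed by (3): evaluating at $x = y$ gives $\|y\|^j = 1$ and $\langle y/\|y\|,y\rangle = 1$, so the constant must equal $\dim(H_j)/(\mu(S)\, C_j^{(\alpha)}(1))$, producing the stated formula, and the formula for general $y$ follows by $SO(n)$-equivariance. I expect the main obstacle to be the translation of $\Delta p = 0$ on $\RR^n$ into the Gegenbauer ODE, since this requires either a careful expansion of the Laplacian in the basis $\{x_n^{j-2k} r^{2k}\}$ and matching the resulting coefficient recurrence against the one defining $C_j^{(\alpha)}$, or a clean invocation of the spherical Laplacian's eigenvalues together with the fact that a $G_y$-invariant eigenfunction depends only on $\langle x,y\rangle$.
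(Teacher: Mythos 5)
Your proposal matches the paper's proof in all essentials: both reduce to $y=e_n$, observe that a $G_{e_n}$-invariant homogeneous polynomial of degree $j$ must be a combination of $x_n^{j-2k}r^{2k}$, extract a two-term recurrence from $\Delta p=0$ to establish uniqueness up to the scalar fixed by condition (3), and then identify the resulting polynomial with $C_j^{(\alpha)}$. The only cosmetic difference is that you propose identifying the Gegenbauer polynomial via its ODE rather than (as the paper does) by matching the coefficient recurrence directly against the recursive definition of $C_j^{(\alpha)}$, and you spell out the Riesz-representer / transitivity arguments for properties (2) and (3) that the paper treats as given from the preceding discussion.
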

\begin{proof} We will show that there is exactly one polynomial satisfying properties $(1)$, $(2)$ and $(3)$. Any $p\in R_j$ can be written as
\[p=\sum_{k=0}^j x_n^k p_{j-k}(x_1,\dots,x_{n-1})\] 
where the $p_{j-k}$ are homogeneous polynomials of degree $j-k$ in the first $(n-1)$ variables. Without loss of generality assume $y=(0,\dots,0,1)$. Since $p$ is fixed by $G_y$ the polynomials $p_{j-k}$ are invariant under arbitrary rotations in $SO(n-1)$ and thus must be scalar multiples of even powers of the norm $(x_1^2+\dots+x_{n-1}^2)$ and in particular $j-k$ is even if $p_{j-k}\neq 0$. Thus we can write
\[p=\sum_{k=0}^{\lfloor\frac{j}{2}\rfloor} x_n^{j-2k}c_k\left(x_1^2+\dots+x_{n-1}^2\right)^{k}\]
for some scalars $c_k$. The equation $\Delta p=0$ then yields the recursive relations
\[2(k+1)(n+2k)c_{k+1} = -(j-2k)(j-2k-1)c_k, \text{ for $k=0,1\dots, j/2-1$}.\]
The constant $c_0$ is uniquely determined by the normalization property $(3)$ above and we have shown existence and uniqueness of the polynomial $p$. Since the polynomial $\phi_y^{(j)}$ which represents evaluation at $y$ on $H_j$ satisfies properties $(1)$ $(2)$ and $(3)$ it must coincide with $p$.  The explicit formula (and the definition of Gegenbauer polynomial) are equivalent to the recursive relations above. 
\end{proof}
Motivated by the previous Theorem we define:
\begin{definition}\label{def: normGegenbauer} The {\it normalized Gegenbauer polynomial} of degree $j$ on the sphere $S\subseteq \RR^n$ is the univariate polynomial given by
\[g_{j}(t) = \frac{{\rm dim}(H_j)}{\mu(S)C_j^{(\alpha)}(1)}C_{j}^{(\alpha)}(t)\]
where $\alpha=\frac{n-2}{2}$ and $C_j^{(\alpha)}(t)$ is the Gegenbauer polynomial defined at the beginning of this Section.
\end{definition}

\subsubsection{An application of reproducing kernels}
As an application of the reproducing kernels for $H_j$ we obtain a well-known sharp bound relating the $L^{\infty}$ and the $L^2$ norm of an arbitrary harmonic polynomial which will be used for obtaining easily computable bounds for Harmonic Hierarchies.

\begin{lemma}\label{lem: sup_bound} If $f\in H_j$ then the following inequality holds
\[\|f\|_{\infty} \leq \sqrt{\frac{{\rm dim}(H_j)}{\mu(S)}}\|f\|_2\]
Furthermore the equality holds if $f(x)=\phi_y^{(j)}(x)$. 
\end{lemma}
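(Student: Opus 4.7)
The plan is to leverage the reproducing kernel $\phi_x^{(j)}$ for $H_j$ constructed in Theorem~\ref{Thm: Zonal} together with the Cauchy--Schwarz inequality. The whole proof is essentially one line once the right identities are assembled, so the work is in correctly computing the $L^2$-norm of the reproducing kernel at a point.

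First I would fix an arbitrary point $x\in S$ and use the reproducing property from Lemma~\ref{lem: easy_RKHS}(2), which says that for every $f\in H_j$ we have $f(x)=\langle f,\phi_x^{(j)}\rangle_{L^2(S,\mu)}$. Applying Cauchy--Schwarz gives the pointwise estimate
\[|f(x)| \;=\; \bigl|\langle f,\phi_x^{(j)}\rangle\bigr| \;\leq\; \|f\|_2\,\bigl\|\phi_x^{(j)}\bigr\|_2.\]
Next I would compute $\|\phi_x^{(j)}\|_2^2$. By the reproducing property applied to $\phi_x^{(j)}$ itself,
\[\bigl\|\phi_x^{(j)}\bigr\|_2^2 \;=\; \langle \phi_x^{(j)},\phi_x^{(j)}\rangle \;=\; \phi_x^{(j)}(x).\]
Now by the normalization condition (3) in Theorem~\ref{Thm: Zonal}, $\phi_x^{(j)}(x)=\dim(H_j)/\mu(S)$, a quantity that does not depend on the chosen $x\in S$. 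Combining these two displays yields $|f(x)|\leq \sqrt{\dim(H_j)/\mu(S)}\,\|f\|_2$, and taking the supremum over $x\in S$ produces the claimed inequality.

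For the sharpness assertion, I would specialize to $f=\phi_y^{(j)}$ for some fixed $y\in S$. Then by the computation above $\|f\|_2^2=\phi_y^{(j)}(y)=\dim(H_j)/\mu(S)$, while $\|f\|_\infty \geq f(y)=\dim(H_j)/\mu(S)$. Multiplying out shows that $\|f\|_\infty \geq \sqrt{\dim(H_j)/\mu(S)}\,\|f\|_2$, and since the opposite inequality is already established, equality holds.

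I do not anticipate any real obstacle: the only thing to be careful about is invoking the correct normalization of $\phi_y^{(j)}$ (namely condition (3) of Theorem~\ref{Thm: Zonal}, which is precisely chosen so that $\phi_y^{(j)}$ serves as the reproducing kernel at $y$ for the $L^2$-inner product on $H_j$), because a different normalization would produce a different constant on the right-hand side.
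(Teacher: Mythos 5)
Your proposal is correct and follows essentially the same argument as the paper: apply the reproducing property, invoke Cauchy--Schwarz, compute $\|\phi_x^{(j)}\|_2$ via the self-reproducing identity $\|\phi_x^{(j)}\|_2^2=\phi_x^{(j)}(x)=\dim(H_j)/\mu(S)$, and observe that $f=\phi_y^{(j)}$ attains equality. The handling of the sharpness case is slightly more explicit than the paper's, but it is the same proof.
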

\begin{proof} The reproducing property of $\phi_y^{(j)}$ implies that the equality
\[f(y) = \int_S f(x)\phi_y(x)d\mu(x)\] 
holds for $f\in H_j$. By the Cauchy-Schwartz inequality this implies
that
\[|f(y)|\leq \|f\|_2 \|\phi_y\|_2\]
Furthermore, by the reproducing property 
\[\|\phi_y\|_2=\left(\int_S \phi_y(x)^2d\mu(y)\right)^{\frac{1}{2}} = \phi_y(y)^{1/2} = \sqrt{\frac{\dim H_j}{\mu(S)}}\]
proving the inequality. Since $\phi_y(y)=\frac{\dim(H_j)}{\mu(S)}$ we see that the equality is achieved when $f(x)=\phi_y(x)$ as claimed.
\end{proof}

\section{Linear harmonic hierarchies}

In this section we prove our main theoretical results, namely Theorem~\ref{Thm: polyhedra} which guarantees the existence of the harmonic hierarchies defined in Construction~\ref{Const: Spherequad} and Theorem~\ref{Thm: bounds} which gives quantitative bounds on their speed of convergence. Our first Lemma explains the key connection between representation theory and convolutions.

\begin{lemma}\label{lem: equiv} For an integer $s\geq 0$ define the linear map $\Gamma_{2s}$ sending a polynomial $f\in R$ to
\[\Gamma_{2s}(f)(x)=\int_S \langle x,y\rangle^{2s}f(y)d\mu(y).\]
The following statements hold:
\begin{enumerate}
\item The map $\Gamma_{2s}$ sends $R_{2k}$ into $R_{2s}$.
\item The map $\Gamma_{2s}$ is $SO(n)$-equivariant and in particular sends the subspace $\|x\|^{2(k-j)}H_{2j}\subseteq R_{2k}$ into the subspace $\|x\|^{2(s-j)}H_{2j}\subseteq R_{2s}$.
\item The map $\hat{\Gamma}_{2s,2k}(f):=\Gamma_{2s}(f)/\|x\|^{2(s-k)}$ is a well-defined linear endomorphism of $R_{2k}$.
\end{enumerate}
\end{lemma}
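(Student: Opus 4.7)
The plan is to prove the three parts in order. For part (1), I would expand $\langle x,y\rangle^{2s}$ by the multinomial theorem and interchange the resulting finite sum with the integral, obtaining
\[\Gamma_{2s}(f)(x) = \sum_{|\alpha|=2s} \binom{2s}{\alpha}\left(\int_S y^\alpha f(y)\,d\mu(y)\right) x^\alpha,\]
which is manifestly a form of degree exactly $2s$ in $x$, independently of the degree of $f$; in particular $\Gamma_{2s}(R_{2k}) \subseteq R_{2s}$.

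For part (2), the first step is to verify $SO(n)$-equivariance of $\Gamma_{2s}$ directly from the definition. Given $g \in SO(n)$, the change of variables $y = gz$ in the defining integral, combined with the $SO(n)$-invariance of $\mu$ and the inner-product identity $\langle x, gz\rangle = \langle g^{-1}x, z\rangle$, yields
\[\Gamma_{2s}(\rho^*(g)f)(x) = \int_S \langle g^{-1}x, z\rangle^{2s} f(z)\,d\mu(z) = (\rho^*(g)\Gamma_{2s}(f))(x).\]
With equivariance established, the inclusion $\Gamma_{2s}(\|x\|^{2(k-j)}H_{2j}) \subseteq \|x\|^{2(s-j)}H_{2j}$ follows from Schur's lemma together with the representation-theoretic fact recalled in Section~\ref{Sec: Harmonic} that the $H_{2j}$ are pairwise non-isomorphic irreducible $SO(n)$-representations. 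Thanks to this, $\|x\|^{2(k-j)}H_{2j}$ is precisely the $H_{2j}$-isotypic subspace of $R_{2k}$, and the only $H_{2j}$-isotypic summand of $R_{2s}$ is $\|x\|^{2(s-j)}H_{2j}$ (taken to be zero when $j>s$), so any equivariant map must respect this decomposition.

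For part (3), I would combine the conclusion of part (2) with the polynomial factorization $\|x\|^{2(s-j)} = \|x\|^{2(s-k)}\cdot \|x\|^{2(k-j)}$ (valid when $s \ge k$). Applied term-by-term to the harmonic decomposition of any $f \in R_{2k}$, this shows every summand of $\Gamma_{2s}(f)$ is divisible in $R$ by $\|x\|^{2(s-k)}$ and that the quotient lies in $\bigoplus_{j=0}^k \|x\|^{2(k-j)}H_{2j} = R_{2k}$; linearity is inherited from $\Gamma_{2s}$. The only step with real conceptual content is the Schur's lemma application in part (2), and the main subtlety there is ensuring that each $\|x\|^{2(m-j)}H_{2j}$ is not merely \emph{an} $H_{2j}$-summand of $R_{2m}$ but the \emph{entire} $H_{2j}$-isotypic component, which is exactly what the pairwise non-isomorphism of the $H_{2j}$'s provides.
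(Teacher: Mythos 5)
Your argument matches the paper's in all three parts: the multinomial expansion for part (1), the change of variables plus the isotypic-component argument for part (2), and the use of part (2) to establish divisibility for part (3). The one omission is that in part (3) you treat only the case $s \geq k$; the paper also records the complementary case $s < k$, where $\hat{\Gamma}_{2s,2k}(f) = \Gamma_{2s}(f)\cdot\|x\|^{2(k-s)}$ is simply multiplication by a fixed polynomial of nonnegative degree, so no divisibility claim is needed. That case is trivial, but your phrasing ``(valid when $s \geq k$)'' silently restricts to one regime, and the case $s<k$ actually does occur in the hierarchies (the low-degree terms of $h$), so you should state explicitly that it is handled by the direct observation above.
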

\begin{proof} $(1)$ By the multinomial theorem for every polynomial $f\in R$ we have
\[\Gamma_{2s}(f) = \sum_{(a_1,\dots, a_n): \sum a_i=2s}\binom{2s}{a_1,\dots, a_n} x_1^{a_1}\dots x_n^{a_n}\int_Sy_1^{a_1}\dots y_n^{a_n}f(y)d\mu(y)\]
which is an element of $R_{2s}$. $(2)$ 
For any $g\in O(n)$ and any $f\in R_{2k}$ we have
\[\rho^*(g)\Gamma_{2s}(f)= \Gamma_{2s}(f)(g^{-1}x)=\int_{S} \langle g^{-1}(x),y\rangle^{2s}f(y)d\mu(y)=\]
making the change of variables $y=g^{-1}(z)$ 
we obtain
\[=\int_{S}\langle g^{-1}(x),g^{-1}(z)\rangle^{2s}f(g^{-1}(z))d\mu(z)= \int_{S}\langle x,z\rangle^{2s}f(g^{-1}(z))d\mu(z)\]
where the second equality follows from the orthogonality of the matrix $g$. Since the last term equals ${\Gamma}_{2s}(\rho^*(g)(f))$ we conclude that $\Gamma_{2s}$ is a morphism of representations and therefore it must map the corresponding isotypical components to each other finishing the proof of $(2)$. Claim $(3)$ is immediate if $s<k$ since the map results from composing with multiplication by a fixed polynomial. If $s\geq k$ then by $(2)$ the subspace $\Gamma_{2s}(R_{2k})$ is contained in the multiples of $\|x\|^{2(s-k)}$ inside $R_{2s}$ proving that the ratio is well-defined.
\end{proof}

\begin{proof}[Proof of Theorem~\ref{Thm: polyhedra}] Since the evaluation at any point ${\rm ev}_x: R_{2k}\rightarrow \RR$ is a linear map the set $Q$, defined by the nonnegativity of finitely many evaluation functions is a polyhedral cone in $R_{2k}$. Using the notation of Lemma~\ref{lem: equiv} part $(3)$ the map $\hat{\Gamma}_h$ can be written as $\hat{\Gamma}_h=\sum_{j=0}^s a_{2j} \hat{\Gamma}_{2j,2k}$ and is therefore well-defined and linear. As a result the set $A:=\hat{\Gamma}_h(Q)$ is also a polyhedral cone in $R_{2k}$. 

Now suppose $f\in Q$, meaning that $f\in R_{2k}$ is nonnegative at all points $X$ of a cubature rule $(X,W)$ of algebraic degree $2(s+k)$ for $\mu$ and we wish to prove that $\hat{\Gamma}(f)$ is a nonnegative polynomial. If $x$ is any point in $S$ then
\[\hat{\Gamma}_h(f)(x)=\int_S\sum_{j=0}^s a_{2j}\langle x,t\rangle^{2j} f(y)d\mu(y)= \int_S\sum_{j=0}^s a_{2j}\langle x,t\rangle^{2j}\|y\|^{2(s-j)} f(y)d\mu(y)\]
where the last equality holds since $y$ is integrated over $S$ where $\|y\|=1$. As a function of $y$ the rightmost integrand is a homogeneous polynomial of degree $2(s+k)$ and we can therefore compute the integral using our cubature rule
\[\int_S\sum_{j=0}^s a_{2j}\langle x,t\rangle^{2j}\|y\|^{2(s-j)} f(y)d\mu(y) = \sum_{z\in X} W(z)h(\langle x,y\rangle) f(z).\]
The rightmost quantity is nonnegative since it is a sum of nonnegative terms because $g$ is nonnegative in the range $[-1,1]$ of $\langle x,y\rangle$, $f\in Q$ and the cubature weights are positive.
\end{proof}

The $SO(n)$-equivariance of the maps $\hat{\Gamma}_h$ (property $(2)$ of Lemma~\ref{lem: equiv}) and the fact that the decomposition of $R_{2k}$ into irreducibles is multiplicity-free already implies that averaging operators must diagonalize in the harmonic basis. We now prove Lemma~\ref{lem: diagonal} which gives an explicit diagonalization.

\begin{proof}[Proof of Lemma~\ref{lem: diagonal}] If $f\in R_{2k}$ is of the form $f=\|x\|^{2(k-\ell)} f_{2\ell}$ for some $f_{2\ell}\in H_{2\ell}$ and $x\in S$ is a point with $f(x)\neq 0$, then we have
\[\hat{\Gamma}_h(f)(x)=\sum_{j=0}^{2s}\lambda_{2j} \int_S g_{2j}(\langle x,y\rangle)f_{2\ell}(y)d\mu(y).\]
By the explicit formula in Theorem~\ref{Thm: Zonal} and definition~\ref{def: normGegenbauer} of normalized Gegenbauer polynomial we know that the equality \[g_{2j}(\langle x,y\rangle)=\phi^{(2j)}_x(y)\]
holds for all $x,y\in S$ and every index $j$. As a result, the reproducing propery of $\phi_x(y)$ and the mutual orthogonality of $H_j$ and $H_t$ for $t\neq j$ imply that 
\[\sum_{j=0}^{2s}\lambda_{2j} \int_S g_{2j}(\langle x,y\rangle)f_{2\ell}(y)d\mu(y) = \lambda_{2\ell} f_{2\ell}(x) \]
from which we know that $\hat{\Gamma}_h(f)=\lambda_{2\ell} f$ since $f(x)\neq 0$. 
\end{proof}

Now let $h(t)$ be an even univariate polynomial which is nonnegative on $[-1,1]$ of degree $s$ and assume $h(t)=\sum_{j=0}^{2s}\lambda_{2j}g_{2j}(t)$ be its unique representation in terms of Gegenbauer polynomials. Recall that the Frobenius threshold of $h(t)$ in degree $2k$ is given by
\[\tau_{2k}(h):=\sqrt{\sum_{j=0}^{2k} \dim(H_{2j})\left(\frac{1}{\lambda_{2j}}-1\right)^2}.\]

The following Lemma shows that the Frobenius threshold of a polynomial controls the distance between its inverse averaging operator and the identity.

\begin{lemma}\label{lem: FrobTineq} Assume $\hat{\Gamma}_h: R_{2k}\rightarrow R_{2k}$ is invertible. For every $f\in R_{2k}$ the following inequalities hold
\[\|\hat{\Gamma}_h^{-1}(f)-f\|_{\infty}\leq \frac{\tau_{2k}(h)}{\sqrt{\mu(S)}}\|f\|_2\]
and
\[  
\|\hat{\Gamma}_h^{-1}(f)-f\|_{\infty}\geq \frac{\tau_{2k}(h)}{\sqrt{\mu(S)}}\min_{j}\frac{\|f_{2j}\|_2}{\sqrt{\dim(H_{2j})}}.\]
\end{lemma}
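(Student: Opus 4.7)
The plan is to use Lemma~\ref{lem: diagonal} to write $\hat{\Gamma}_h^{-1}(f)-f$ explicitly in the harmonic basis, then control the resulting expression by applying the sharp $L^\infty$--$L^2$ estimate of Lemma~\ref{lem: sup_bound} componentwise. Writing the harmonic expansion of $f\in R_{2k}$ as
\[
f = \|x\|^{2k}f_0 + \|x\|^{2(k-1)}f_2 + \cdots + f_{2k},
\]
Lemma~\ref{lem: diagonal} says $\hat{\Gamma}_h$ acts diagonally on each summand by multiplication by $\lambda_{2j}$, so assuming invertibility (i.e.\ $\lambda_{2j}\neq 0$ for $j=0,\dots,k$) we get
\[
\hat{\Gamma}_h^{-1}(f) - f = \sum_{j=0}^{k}\left(\frac{1}{\lambda_{2j}}-1\right)\|x\|^{2(k-j)}f_{2j}.
\]
Since both norms in the statement are computed on $S$, where $\|x\|=1$, this restricts on $S$ to $\sum_{j=0}^k (1/\lambda_{2j}-1)f_{2j}$, and I will work with this simplified form.

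For the upper bound, I would combine the triangle inequality, Lemma~\ref{lem: sup_bound} applied to each harmonic piece $f_{2j}\in H_{2j}$, and Cauchy--Schwarz on the resulting sum:
\[
\|\hat{\Gamma}_h^{-1}(f)-f\|_\infty \leq \sum_{j=0}^{k}\left|\frac{1}{\lambda_{2j}}-1\right|\sqrt{\frac{\dim(H_{2j})}{\mu(S)}}\,\|f_{2j}\|_2 \leq \frac{\tau_{2k}(h)}{\sqrt{\mu(S)}}\,\sqrt{\sum_{j=0}^{k}\|f_{2j}\|_2^2}.
\]
The final step is to identify $\sum_j \|f_{2j}\|_2^2$ with $\|f\|_2^2$, which follows because the restrictions of $\|x\|^{2(k-j)}f_{2j}$ and $\|x\|^{2(k-\ell)}f_{2\ell}$ to $S$ are simply $f_{2j}$ and $f_{2\ell}$, and harmonic polynomials of distinct even degrees are mutually orthogonal in $L^2(S,\mu)$ (the irreducible decomposition in Section~\ref{Sec: Harmonic}).

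For the lower bound, I would use the elementary inequality $\|g\|_\infty \geq \|g\|_2/\sqrt{\mu(S)}$ applied to $g=\hat{\Gamma}_h^{-1}(f)-f$, and the same Pythagorean identity on $S$, to obtain
\[
\|\hat{\Gamma}_h^{-1}(f)-f\|_\infty^2 \geq \frac{1}{\mu(S)}\sum_{j=0}^{k}\left(\frac{1}{\lambda_{2j}}-1\right)^2 \|f_{2j}\|_2^2.
\]
Setting $m:=\min_{j}\|f_{2j}\|_2/\sqrt{\dim(H_{2j})}$, the inequality $\|f_{2j}\|_2^2 \geq m^2\dim(H_{2j})$ lets me factor the minimum outside and recognize the remaining sum as $\tau_{2k}(h)^2$, giving the claim.

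There is no real technical obstacle here beyond careful bookkeeping: everything is a direct consequence of the diagonalization in Lemma~\ref{lem: diagonal}, the zonal $L^\infty$--$L^2$ bound in Lemma~\ref{lem: sup_bound}, and the orthogonality of harmonic subspaces under the $L^2(S,\mu)$ inner product. The one subtlety worth flagging is keeping straight that the harmonic components of $f$ carry different factors of $\|x\|^{2(k-j)}$ when viewed as elements of $R_{2k}$, yet these factors disappear upon restriction to $S$, which is exactly the setting in which both sides of the inequality are measured.
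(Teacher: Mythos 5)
Your proof is correct and follows essentially the same route as the paper: diagonalize via Lemma~\ref{lem: diagonal}, invoke the zonal $L^\infty$--$L^2$ bound of Lemma~\ref{lem: sup_bound} on the harmonic pieces, use Cauchy--Schwarz for the upper bound, and integrate against $\mu$ (the trivial $\|g\|_\infty\ge\|g\|_2/\sqrt{\mu(S)}$ bound) with orthogonality for the lower bound. The only cosmetic difference is the order of operations in the upper bound (triangle inequality, then componentwise sup-bound, then Cauchy--Schwarz, versus the paper's pointwise Cauchy--Schwarz first), which yields the same estimate.
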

\begin{proof} If $f\in R_{2k}$ has the harmonic expansion
\[f= \|x\|^{2k}f_0 + \|x\|^{2(k-1)}f_2 + \|x\|^{2(k-2)}f_4 +\dots+ f_{2k},\] 
then Lemma~\ref{lem: diagonal} implies that for any $z\in S$ the equality 
\begin{equation}
\label{eq: initial}
\Gamma_h^{-1}(f)(z)-f(z) = \sum_{j=0}^k \left(\frac{1}{\lambda_{2j}}-1\right) f_{2j}(z)  
\end{equation}
holds. By the Cauchy-Schwartz inequality this quantity is bounded above by
\[\sqrt{\sum_{j=0}^k \left(\frac{1}{\lambda_{2j}}-1\right)^2{\rm dim}(H_{2j})}\sqrt{\sum_{j=0}^k\frac{f_{2j}^2(z)}{{\rm dim}(H_{2j})}} \]
Since the $f_{2j}$ are harmonic, Lemma~\ref{lem: sup_bound} implies that the inequality 
\[\|f_{2j}\|_{\infty}^2\leq \frac{\dim(H_{2j})}{\mu(S)}\|f_{2j}\|_2^2\] 
holds and therefore~(\ref{eq: initial}) is bounded above by
\[\sqrt{\sum_{j=0}^k \left(\frac{1}{\lambda_{2j}}-1\right)^2{\rm dim}(H_{2j})}\sqrt{\sum_{j=0}^k\frac{\|f_{2j}^2\|_2^2}{\mu(S)}} = \tau_{2k}(h) \frac{\|f\|_2}{\sqrt{\mu(S)}}\]
as claimed. For the lower bound note that by~(\ref{eq: initial}) the following inequality holds for every $z\in S$

\[|\Gamma_h^{-1}(f)-f(z)|^2 \geq \left(\sum_{j=0}^k \sqrt{{\rm dim}(H_{2j})}\left(\frac{1}{\lambda_{2j}}-1\right) \frac{f_{2j}(z)}{\sqrt{{\rm dim}(H_{2j})}}\right)^2\]
integrating both sides over the sphere and dividing by $\mu(S)$ we conclude that 

\[\|\Gamma_h^{-1}(f)-f(z)\|_{\infty}^2 \geq \frac{1}{\mu(S)}\sum_{j=0}^k {\rm dim}(H_{2j})\left(\frac{1}{\lambda_{2j}}-1\right)^2 \frac{\|f_{2j}\|_2^2}{{\rm dim}(H_{2j})}\]

where we have used the fact that the $f_{2j}$ are pairwise orthogonal. We conclude that

\[\|\Gamma_h^{-1}(f)-f(z)\|_{\infty}^2 \geq \frac{\tau_{2k}^2(h)}{\mu(S)}\min_j \frac{\|f_{2j}\|_2^2}{{\rm dim}(H_{2j})}\]
which taking square roots is equivalent to the claimed lower bound.

\end{proof}

\begin{proof}[Proof of Theorem~\ref{Thm: bounds}] $(1)$ Follows immediately from Theorem~\ref{Thm: polyhedra} applied to the given sequence of polynomials $(h_s)_{s\in \mathbb{N}}$. Assume $\hat{\Gamma}_{h_s}$ is invertible and let $f\in R_{2k}$. For any $z\in X$ we have
\[\hat{\Gamma}_{h_s}^{-1}(f)(z)= \hat{\Gamma}_{h_s}^{-1}(f)(z)-f(z)+f(z)\geq  \min_{z\in X}f(z) - \|\hat{\Gamma}_{h_s}^{-1}(f)-f\|_{\infty}.\] 
If $f$ satisfies the hypothesis of $(2)$ then the rightmost term is strictly positive and therefore $\hat{\Gamma}_{h_s}^{-1}(f)\in Q_s$ because it is nonnegative at all nodes of the quadrature rule and therefore $f\in A_s$ as claimed. $(3)$ If $f$ is a strictly positive polynomial on $S$ then by compactness of the sphere it achieves a strictly positive minimum $\alpha^*$. By part $(2)$ the polynomial $f$ belongs to $A_s$ whenever $\alpha^*>\frac{\tau_{2k}(h_s)}{\mu(S)}\|f\|_2$ which happens for all sufficiently large $s$ since $\tau_{2k}(h_s)\rightarrow 0$ as $s\rightarrow \infty$. \end{proof}

\subsection{Optimization-free lower bounds for polynomial minimization.}\label{Sec: optimization-free}

Suppose $f\in R_{2k}$ and let $\alpha^*:=\min_{x\in S}f(x)$. Assume $h(t)$ is a univariate, even, nonnegative polynomial of degree $2s$ with $h(0)=1$ and such that $\hat{\Gamma}_h:R_{2k}\rightarrow R_{2k}$ is invertible. Assume $(X,W)$ is a cubature rule of algebraic degree $2(k+s)$. 
As an application of the theory developed so far we will obtain optimization-free lower bounds $\beta\leq \alpha^*$ via the following steps:

\begin{enumerate}
\item Compute a harmonic decomposition for $f$
\[ f= \|x\|^{2k}f_0 + \|x\|^{2(k-1)}f_2 + \|x\|^{2(k-2)}f_4 +\dots+ f_{2k}\] 
\item Compute the coefficients $\lambda_{2j}$ of the expansion of $h$ in terms of normalized Gegenbauer polynomials. By our assumptions $\lambda_0=1$ and that $\lambda_{2j}\neq 0$ for $j=0,\dots, k$.
\item Compute the polynomial $F:=\hat{\Gamma}_{g}^{-1}(f)$ with the formula
\[ F= \|x\|^{2k}f_0 + \frac{1}{\lambda_2}\|x\|^{2(k-1)}f_2 + \frac{1}{\lambda_4}\|x\|^{2(k-2)}f_4 +\dots+ \frac{1}{\lambda_{2k}}f_{2k}\] 
\item Evaluate $F(z)$ for $z\in X$ and let $\beta^*:=\min_{z\in X}F(z)$ be the smallest of those values.
\end{enumerate}

\begin{lemma} The inequality $\beta^*\leq \alpha^*$ holds.
\end{lemma}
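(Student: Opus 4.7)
The plan is to evaluate the identity $\hat{\Gamma}_h(F) = f$ at a global minimizer $x^* \in S$ of $f$ and to exploit the cubature rule together with the nonnegativity of $h$. Copying the key computation from the proof of Theorem~\ref{Thm: polyhedra}, for any $x \in S$ one can rewrite
\[
\hat{\Gamma}_h(F)(x) = \int_{S} \sum_{j=0}^{s} a_{2j}\, \langle x,y\rangle^{2j}\,\|y\|^{2(s-j)}\, F(y)\, d\mu(y),
\]
whose integrand, as a function of $y$, is now a genuine polynomial of degree $2(s+k)$. Since the cubature rule $(X,W)$ is exact on that degree and $\|z\|=1$ for every $z \in X \subseteq S$, evaluating at $x=x^*$ yields the cubature identity
\[
\alpha^* \;=\; f(x^*) \;=\; \hat{\Gamma}_h(F)(x^*) \;=\; \sum_{z \in X} W(z)\, h(\langle x^*, z\rangle)\, F(z).
\]

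Next I would combine three nonnegativities: the cubature weights $W(z)$ are positive, each $h(\langle x^*, z\rangle) \geq 0$ because $h$ is nonnegative on $[-1,1]$ and $|\langle x^*, z\rangle| \leq 1$, and $F(z) \geq \beta^*$ for every $z \in X$ by definition of $\beta^*$. This gives
\[
\alpha^* \;\geq\; \beta^* \sum_{z \in X} W(z)\, h(\langle x^*, z\rangle).
\]
Since $h(\langle x^*, y\rangle)$ is a polynomial in $y$ of degree $2s \leq 2(s+k)$, a second application of the cubature rule identifies the remaining sum with the integral $\int_S h(\langle x^*, y\rangle)\, d\mu(y)$. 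Expanding $h = \sum_{j} \lambda_{2j}\, g_{2j}$ in the normalized Gegenbauer basis and invoking Theorem~\ref{Thm: Zonal} to recognize $g_{2j}(\langle x^*, \cdot\rangle)$ as the harmonic reproducing element $\phi_{x^*}^{(2j)}$, every summand with $j \geq 1$ integrates to zero by the orthogonality $H_0 \perp H_{2j}$. The $j=0$ term contributes exactly $\lambda_0$, which equals $1$ by the normalization assumed in the setup. Therefore $\alpha^* \geq \beta^*$.

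The main technical point is the rewriting in the first step: one must verify that after inserting the factors $\|y\|^{2(s-j)}$ (invisible on the sphere) the integrand becomes an honest polynomial of precisely the degree that the cubature rule is designed to integrate, and that no positivity of $F$ is needed at this stage since the identity $\hat{\Gamma}_h(F) = f$ is being used rather than the inclusion $\hat{\Gamma}_h(Q) \subseteq P_{2k}$. Once that bookkeeping is in place, the rest of the argument is just a chain of nonnegativity estimates closing on the $\lambda_0 = 1$ normalization.
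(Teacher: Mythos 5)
Your argument is correct. Your route is a close variant of the paper's: the paper applies Theorem~\ref{Thm: polyhedra} to the polynomial $p := F - \beta^*\|x\|^{2k}$, observes that $p$ lies in $Q$ (since $F(z) \geq \beta^*$ at every cubature node), and hence that $\hat{\Gamma}_h(p) = f - \beta^*\|x\|^{2k}$ is nonnegative on $S$, using the linear-algebra fact $\hat{\Gamma}_h(\|x\|^{2k}) = \lambda_0\|x\|^{2k} = \|x\|^{2k}$ from Lemma~\ref{lem: diagonal}. You instead inline the proof of Theorem~\ref{Thm: polyhedra} at the single point $x^*$, unfolding the inclusion $A \subseteq P_{2k}$ into the explicit sum $\sum_z W(z)\,h(\langle x^*,z\rangle)\,F(z)$ and replacing the identity $\hat{\Gamma}_h(\|x\|^{2k}) = \|x\|^{2k}$ by the equivalent integral computation $\int_S h(\langle x^*, y\rangle)\,d\mu(y) = \lambda_0 = 1$ (itself a cubature identity plus orthogonality of harmonics). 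The paper's version is shorter and more modular since it reuses the cone inclusion already proved; yours is self-contained and makes the role of each positivity hypothesis — positive weights, nonnegativity of $h$ on $[-1,1]$, and $F \geq \beta^*$ on the nodes — visible term by term. Both rely on the same three ingredients and the normalization $\lambda_0 = 1$, so the logical content is identical; the difference is entirely one of packaging.
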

\begin{proof} By construction the polynomial $p:=F-\beta^*\|x\|^{2k}$ is nonnegative at all cubature nodes $X$ and our cubature rule has algebraic degree $2(k+s)$. By Theorem~\ref{Thm: polyhedra} we conclude that $\hat{\Gamma}_h(p)\in A$ and is in particular a nonnegative polynomial. Since $F=\hat{\Gamma}_h^{-1}(f)$ and $\hat{\Gamma}_h(\|x\|^{2k})=\|x\|^{2k}$ because $\lambda_0=1$ we conclude that $\hat{\Gamma}_h(p)=f-\beta^*\|x\|^{2k}$ proving that $f$ is bounded below by $\beta^*$.
\end{proof}

\begin{remark} The number $\beta^*$ coincides with the optimum value of the linear optimization problem $\sup\left\{\lambda: f(x)-\lambda \in A\right\}$ because the point evaluations at the cubature nodes contain the extreme rays of the polyhedron $Q$. Since enumerating the cubature nodes is necessary to formulate the underlying linear optimization problem our optimization-free algorithm is equally accurate and computationally less expensive than linear optimization. 
\end{remark}

\subsection{Kernel selection}\label{Sec: KernelSelection}

In this section we address the problem of choosing the polynomial sequence $(h_s(t))_{s\in \mathbb{N}}$ so the resulting Harmonic Hierarchy converges quickly. We begin by proving Corollary~\ref{cor: various_g} which illustrates that the chosen sequence has indeed  a drastic effect on the rate of convergence.

\begin{proof}[Proof of Corollary~\ref{cor: various_g} (1)] In \cite{BConv} Blekherman explicitly calculates the coefficients of $h_s(t)=\tfrac{t^{2s}}{\int_S y^{2s} d\mu(y)}$ as a linear combination of Gegenbauer polynomials: 
\[
\lambda_{2j}^{(2s)} = \frac{s!\Gamma(\frac{2s+n}{2})}{(s-j)!\Gamma(\frac{2s+2j+n}{2})},
\]
here $\Gamma$ denotes the usual gamma function. When $j=0$, $\lambda_{0}^{(2s)} = 1$.  For $j>0$, the recursion property of the gamma function gives
\[
	\lambda_{2j}^{(2s)} = \frac{s!\prod\limits_{t=1}^s(t+\frac{n}{2})\Gamma(\frac{n}{2})}{(s-j)!\prod\limits_{t=1}^{s+j}(t+\frac{n}{2})\Gamma(\frac{n}{2})} = \frac{s(s-1)\dots (s-j+1)}{(s+j+\frac{n}{2})(s+j-1+\frac{n}{2})\dots(s+1+\frac{n}{2})}.
\]
By factoring $s$ on all terms and separating the product suitably, we can rewrite
\[
	\lambda_{2j}^{(2s)} = \frac{1}{1+\frac{j+n/2}{s}} \frac{1-\frac{1}{s}}{1+\frac{j-1+n/2}{s}} \dots \frac{1-\frac{j-1}{s}}{1+\frac{1+n/2}{s}}.
\]
Let us now consider the logarithm of $1/\lambda_{2j}^{(2s)}$
\[
\log\left(\frac{1}{\lambda_{2j}^{(2s)}}\right) = \sum\limits_{t=1}^j \log\left( 1+\frac{t+\frac{n}{2}}{s} \right) - \sum\limits_{t=1}^{j-1} \log\left(1-\frac{t}{s}\right),
\]
a Taylor expansion of the previous terms yields the following
\begin{align*}
	\log\left(\frac{1}{\lambda_{2j}^{(2s)}}\right) = \frac{1}{s}\left(j^2 +  \frac{jn}{2} \right) +  O\left( \frac{1}{s^2} \right).
\end{align*}
With the previous approximation and yet another Taylor expansion we obtain
\[
\frac{1}{\lambda_{2j}^{(2s)}} -1 \approx  e^{ \frac{j^2+\frac{jn}{2} }{s}  } -1 =  \frac{j^2+\frac{jn}{2} }{s}  + O\left( \frac{1}{s^2} \right),
\]
except when $j=0$, in which case ${1}/{\lambda_{0}^{(2s)}} -1=0$. Now, using the previous analysis on the Frobenius threshold results in 
\[
\tau_{2k}(h_s)=\sqrt{\sum_{j=0}^{k} \dim(H_{2j}) \left(\frac{1}{\lambda_{2j}^{(2s)}}-1\right)^2} \approx \sqrt{\sum_{j=1}^{k} \left(\frac{j^2+\frac{jn}{2} }{s}  + O\left( \frac{1}{s^2} \right) \right)^2}
\]
which is bounded above and below in the following way
\[
\frac{1+\frac{n}{2}}{s}+O\left( \frac{1}{s^2} \right) \leq \tau_{2k}(h_s) \leq  D_{2k}\frac{k^2+\frac{kn}{2} }{s} + O\left( \frac{1}{s^2} \right),
\]
where $D_{2k}=\max_{j=0,\dots, k} \dim(H_{2j})$.
\end{proof}

The proof of Corollary~\ref{cor: various_g} (2) requires a bit more work. Suppose  \[
h_s(t)=q_s(t)^2=\sum\limits_{j=0}^{2s} \lambda_j g_j(t),
\]
where $q_s(t) = \sum_{j=0}^s \eta_j g_j(t).$ Since Gegenbauer polynomials form an orthogonal base (with respecto to the weighted 2-norm), we have that
\[
\lambda_\ell = \frac{\int\limits_{-1}^1 q_s(t)^2 g_\ell(t) w(t) dt }{\frac{\dim(H_\ell)^2N_{\ell}^2}{C_\ell(1)^2\mu(S)^2}}
\]
since $\int_{-1}^1 g_\ell(t)^2 w(t)dt = \tfrac{\dim(H_\ell)^2N_{\ell}^2}{C_\ell(1)^2\mu(S)^2}$ where $N_{\ell}=\int_{-1}^1 C_{\ell}(t) w(t)dt$. Define the Toeplitz matrix $\tau[f]$ of a polynomial $f$ as the $s\times s$ matrix with $(i,j)$-th coordinate
\[
\tau[f]_{ij} := \int\limits_{-1}^1 g_i(t) g_j(t) f(t) w(t) dt,
\]
and define $A_\ell = \tau[g_\ell]$.
Note then that
\begin{align}
	\lambda_\ell = \frac{C_\ell(1)^2\mu(S)^2}{\dim(H_\ell)^2N_{\ell}^2} \eta^t A_\ell \eta. \label{eq:eigenvalues}
\end{align}
Now, we are interested in minimizing the Frobenius threshold over all $q_s$
\[
\tau^*_{2k,s}=\min_{q_s} \tau_{2k}(h_s)  = \min_{q_s} \sqrt{ \sum\limits_{j=0 }^k \dim(H_{2j}) \left( \frac{1}{\lambda_{2j}} -1 \right)^2 } 
\]
under the contraint $\lambda_0=1$. We will now attempt to find an upper bound on $\tau_{2k,s}^*$ with the solution to the alternative problem 
\[
\rho^*_{2k,s} = \min_{q_s,\lambda_0=1} \sum\limits_{j=0}^k (1-\lambda_{2j}),
\]
which we will prove can be reformulated as an eigenvalue problem. First, by equation~(\ref{eq:eigenvalues})
\[
\sum\limits_{j=0}^k (1-\lambda_{2j}) =  \sum\limits_{j=0}^k \left(1- {\frac{C_{2j}(1)^2\mu(S)^2}{\dim(H_{2j})^2N_{2\ell}^2}} \eta^t A_{2j} \eta\right) = k -\eta^t \left( \sum\limits_{j=0}^k \frac{C_{2j}(1)^2\mu(S)^2}{\dim(H_{2j})^2N_{2\ell}^2} A_{2j}  \right) \eta ,
\]
thus 
\[
\rho^*_{2k,s} = \min_{q_s,\lambda_0=1} \sum\limits_{j=0}^k (1-\lambda_{2j})= k -  \max_{\eta,\lambda_0=1} \eta^t \left( \sum\limits_{j=0}^k \frac{C_{2j}(1)^2\mu(S)^2}{\dim(H_{2j})^2N_{2\ell}^2} A_{2j}  \right) \eta.
\]
This is strictly not an eigenvalue problem since $\eta$ is not necessarily restricted to normalized vectors, however we can properly rescale $\eta$ by noticing that
\[
(A_0)_{i,j} = \int \limits_{-1}^1 g_i(t) g_j(t) w(t) dt = \begin{cases}
	\frac{\dim(H_{j})^2N_{j}^2}{C_{j}(1)^2\mu(S)^2} & i=j,\\
	0 &  i\neq j,
\end{cases}
\]
or put in another way $A_0$ is diagonal with coefficients $\frac{\dim(H_{j})^2}{C_{j}(1)^2\mu(S)^2}$ for $0\leq j\leq s$. This means that
\[
\lambda_0 =\frac{C_0(1)^2\mu(S)^2}{\dim(H_0)^2N_{0}^2} \eta^t A_0 \eta= \sum\limits_{j=0}^s \frac{\dim(H_{j})^2N_{j}^2}{C_{j}(1)^2N_0^2} \eta_j^2 =1,
\]
so the change of basis $e_j=\frac{\dim(H_j)N_{j}}{C_j(1)N_0} \eta_j$, which is in fact $e= \frac{\mu(S)}{N_0} \sqrt{  A_0 } \eta$, lets us rewrite the constraint $\lambda_0=1$ as $||e||_2 = 1$. Consequently, the alternate problem becomes an eigenvalue problem
\begin{align*}
	\rho^*_{2k,s} &= k -  \max_{e,||e||_2 = 1} e^t \left(\frac{\mu(S)}{N_0} \sqrt{ A_0 }\right)^{-1} \left( \sum\limits_{j=0}^k \frac{C_{2j}(1)^2\mu(S)^2}{\dim(H_{2j})^2N_{2j}^2} A_{2j}  \right) \left(\frac{\mu(S)}{N_0} \sqrt{ A_0 }\right)^{-1} e\\
	&= k - k \max_{e,||e||_2 = 1} e^t  \left( \frac{1}{k}\sum\limits_{j=0}^k \frac{C_{2j}(1)^2N_0^2}{\dim(H_{2j})^2N_{2j}^2} A_0^{-1} A_{2j}  \right)  e\\
	&= k - k \lambda_{max}(T_{2k,s})
\end{align*}
where 
\[
T_{2k,s}= \frac{1}{k}\sum\limits_{j=0}^k \frac{C_{2j}(1)^2N_0^2}{\dim(H_{2j})^2N_{2j}^2} A_0^{-1} A_{2j}
\] 
and $\lambda_{max}(T_{2k,s})$ is its maximum eigenvalue. Hence, the optimal $\rho_{2k,s}^*$ is realized by any normalized eigenvector $e^*$ of $T_{2k,s}$ for the eigenvalue $\lambda_{max}(T_{2k,s})$. Furthermore, Fang and Fawzi proved that the choice of $\lambda_{\ell}$ corresponding to $e^*$ converges to 0 as $s\to\infty$ with a rate of $1/s^2$. More precisely:

\begin{theorem}(Fang-Fawzi~\cite{FangFawzi}, Proposition 7)
	The matrix $T_{2k,s}$ satisfies $$\lambda_{max}(T_{2k,s})\geq 1-kn^2O(\tfrac{1}{s^2}),$$ thus $\rho^*_{2k,s} \leq k^2n^2 O(\tfrac{1}{s^2})$.
\end{theorem}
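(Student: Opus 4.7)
The plan is to invoke the variational characterization $\lambda_{\max}(T_{2k,s}) = \max_{\|e\|_2=1} e^t T_{2k,s} e$, which reduces the theorem to producing a single unit vector $e$ (or equivalently, a polynomial $q_s$ of degree at most $s$ with $\lambda_0(q_s^2)=1$) achieving $e^t T_{2k,s} e \geq 1 - kn^2\, O(1/s^2)$. By the reformulation preceding the theorem, this quadratic form is (up to the normalization by $k$) a sum of the Gegenbauer coefficients $\lambda_{2j}$ of $h_s := q_s^2$, so the target becomes the construction of a $q_s$ such that $\lambda_0 = 1$ and each $1 - \lambda_{2j}$ is at most $n^2\, O(1/s^2)$ uniformly for $1 \leq j \leq k$. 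The bound $\rho^*_{2k,s} \leq k^2 n^2\, O(1/s^2)$ then follows immediately from the identity $\rho^*_{2k,s} = k - k\lambda_{\max}(T_{2k,s})$ established above.

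My candidate $q_s$ is a \emph{needle kernel}: a polynomial of degree $s$ whose square concentrates mass in an $O(1/s^2)$ window around $t=1$. Heuristically, as $q_s(t)^2$ concentrates near $t=1$, each $\lambda_{2j}$ tends to $g_{2j}(1)\,\mu(S)/\dim(H_{2j}) = 1$ by the reproducing property from Theorem~\ref{Thm: Zonal}. Following Fang and Fawzi, an explicit choice is a rescaled Jacobi polynomial $q_s(t) \propto P_s^{(\alpha,\beta)}\!\left(\tfrac{1+t}{2}\right)$ with $\alpha=(n-2)/2$, or equivalently a damped Christoffel--Darboux kernel for the weight $(1-t^2)^{(n-3)/2}$ evaluated at the endpoint $1$. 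Given such a kernel, the per-coefficient estimate is obtained by Taylor-expanding $g_{2j}(t)$ around its maximum at $t=1$: the linear term is harmless at a critical point, and higher-order terms are controlled using the classical Gegenbauer identity $\tfrac{d}{dt}C_j^{(\alpha)}(t) = 2\alpha\, C_{j-1}^{(\alpha+1)}(t)$ (evaluated at $t=1$ to bound derivatives) combined with the concentration moments $\int_{-1}^1 q_s(t)^2 (1-t)^\ell (1-t^2)^{(n-3)/2}\, dt = O(s^{-2\ell})$ of the chosen kernel.

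The main obstacle is keeping the $n$-dependence down to a single factor of $n^2$. This is delicate because $g_{2j}$, through Definition~\ref{def: normGegenbauer}, hides several $n$-dependent normalization constants ($\dim(H_{2j})$, $C_{2j}^{(\alpha)}(1)$, and the weighted $L^2$-norm $N_{2j}$), all of which must be tracked precisely through the Taylor estimate as well as through the sharp asymptotics of the Jacobi polynomials in the boundary layer near $t=1$. Fang and Fawzi sidestep the accumulation of $n$-factors by exploiting an explicit identity tailored to their specific kernel; an independent proof would either have to reproduce that identity or replace it with comparably sharp one-dimensional estimates (for instance via the Mehler--Heine asymptotics for Jacobi polynomials together with monotonicity of their recurrence coefficients). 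This is the technical core of the argument.
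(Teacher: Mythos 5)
The paper does not actually prove this statement; it is imported as a black-box citation to Fang and Fawzi's Proposition 7, and the only piece the paper itself supplies is the one-line deduction of the second inequality from the identity $\rho^*_{2k,s} = k - k\,\lambda_{\max}(T_{2k,s})$. Your proposal correctly reproduces that deduction and correctly recognizes the variational reduction: to lower-bound $\lambda_{\max}(T_{2k,s})$ it suffices to exhibit a single admissible $q_s$ with $\lambda_0(q_s^2)=1$ and each $1-\lambda_{2j}$ of order $n^2/s^2$ uniformly in $j\leq k$. Your sketch of the Fang--Fawzi mechanism --- choosing a concentration kernel near $t=1$ (a Christoffel--Darboux / needle polynomial for the weight $(1-t^2)^{(n-3)/2}$), Taylor-expanding the normalized Gegenbauer $g_{2j}$ about its maximum at $t=1$ using $\frac{d}{dt}C_j^{(\alpha)} = 2\alpha C_{j-1}^{(\alpha+1)}$, and controlling the remainder by the concentration moments $\int q_s^2 (1-t)^{\ell} w\,dt$ --- is directionally faithful to what Fang and Fawzi actually do.

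That said, what you have is a plan, not a proof, and you say so yourself: the paragraph flagging ``the technical core'' (making the Jacobi boundary-layer asymptotics precise enough to accumulate only a single factor of $n^2$, while tracking $\dim(H_{2j})$, $C_{2j}^{(\alpha)}(1)$, and $N_{2j}$ through the normalizations of Definition~\ref{def: normGegenbauer}) defers exactly the part that carries the content of the theorem. Two smaller issues: your candidate $q_s(t)\propto P_s^{(\alpha,\beta)}\!\bigl(\tfrac{1+t}{2}\bigr)$ rescales the argument to $[0,1]$ rather than keeping it on $[-1,1]$, which does not match the weight the paper uses on $[-1,1]$ and should be fixed before any estimate can be carried out; and the claim that the linear term in the Taylor expansion ``is harmless at a critical point'' needs care, since $t=1$ is an endpoint of the interval, not an interior critical point of $g_{2j}$ (one must instead use that the kernel's mass is supported to one side of $t=1$ and that $g_{2j}$ is increasing there). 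Since the paper offers no internal proof to compare against, the honest assessment is that your reconstruction identifies the right strategy but stops short of the quantitative estimate that is the actual theorem.
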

To see how this result connects to our original problem, we exhibit the relation between $\tau_{2k,s}(h_s)$ and $\rho_{2k,s}^*=\rho_{2k,s}(h_s)$ when $h_s$ is the optimal kernel obtained by minimizing $\rho_{2k,s}$.
\begin{lemma}
	If $\rho^*_{2k,s} < 1$  then $\tau_{2k}(h_s) \leq \sqrt{D_{2k}}\frac{\rho_{2k,s}^*}{1-\rho_{2k,s}^*}$, in particular $\tau_{2k,s}^* \leq \sqrt{{D_{2k}}}\frac{\rho_{2k,s}^*}{1-\rho_{2k,s}^*}$.
\end{lemma}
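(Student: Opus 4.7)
My approach will go in three short steps: (i) show that every coefficient $\lambda_{2j}$ appearing in the Gegenbauer expansion of a nonnegative kernel $h_s$ with $\lambda_0=1$ satisfies $\lambda_{2j}\le 1$; (ii) deduce a pointwise lower bound $\lambda_{2j}\ge 1-\rho^*_{2k,s}$ from the definition of $\rho^*_{2k,s}$; and (iii) perform a one-line ``max times sum'' estimate on the Frobenius threshold. Steps (ii) and (iii) are essentially bookkeeping; step (i) is where the only real thought is needed.

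For step (i), Funk--Hecke (equivalently, Theorem~\ref{Thm: Zonal} together with Lemma~\ref{lem: diagonal}) expresses $\lambda_{2j}$ as a fixed positive constant times $\int_{-1}^1 h_s(t)\,C_{2j}^{(\alpha)}(t)(1-t^2)^{(n-3)/2}\,dt$, with the same normalizing constant appearing for $\lambda_0$. Combining the classical bound $|C_{2j}^{(\alpha)}(t)|\le C_{2j}^{(\alpha)}(1)$ for $t\in[-1,1]$ with the pointwise nonnegativity of $h_s=q_s^2$ yields $|\lambda_{2j}|\le \lambda_0 = 1$; in particular $1-\lambda_{2j}\ge 0$ for every $j$. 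For step (ii), since $\lambda_0=1$ we have $\rho^*_{2k,s}=\sum_{j=0}^k (1-\lambda_{2j})=\sum_{j=1}^k (1-\lambda_{2j})$ with nonnegative summands, so each individual summand obeys $1-\lambda_{2j}\le \rho^*_{2k,s}$, giving $\lambda_{2j}\ge 1-\rho^*_{2k,s}>0$ by the hypothesis $\rho^*_{2k,s}<1$.

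For step (iii), I rewrite $\bigl(\tfrac{1}{\lambda_{2j}}-1\bigr)^2 = \tfrac{(1-\lambda_{2j})^2}{\lambda_{2j}^2}$ and use $\dim(H_{2j})\le D_{2k}$ together with the lower bound from step (ii) to obtain
\[
\tau_{2k}(h_s)^2 \;\le\; \frac{D_{2k}}{(1-\rho^*_{2k,s})^2}\sum_{j=0}^k (1-\lambda_{2j})^2.
\]
A ``max times sum'' majorization then finishes the estimate:
\[
\sum_{j=0}^k (1-\lambda_{2j})^2 \;\le\; \Bigl(\max_{j}(1-\lambda_{2j})\Bigr)\sum_{j=0}^k (1-\lambda_{2j}) \;\le\; (\rho^*_{2k,s})^2,
\]
and taking square roots yields the claimed bound. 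The main (modest) obstacle is recording step (i) cleanly, after which the rest is elementary algebra; the final inequality $\tau_{2k,s}^*\le \sqrt{D_{2k}}\,\rho^*_{2k,s}/(1-\rho^*_{2k,s})$ is then immediate from the definition of $\tau_{2k,s}^*$ as the minimum over all admissible $q_s$.
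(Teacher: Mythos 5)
Your proof is correct and follows essentially the same strategy as the paper: both arguments hinge on the two facts that $1-\lambda_{2j}\ge 0$ for every $j$ and that $\lambda_{2j}\ge 1-\rho^*_{2k,s}>0$, from which the estimate follows by elementary manipulation. The only real difference is the final algebraic step: the paper passes from the $\ell^2$-sum to the $\ell^1$-sum via $\sqrt{\sum a_j^2}\le\sum|a_j|$ and then bounds $\sum\frac{1-\lambda_{2j}}{\lambda_{2j}}$ directly, whereas you stay with the $\ell^2$-sum and close with the majorization $\sum(1-\lambda_{2j})^2\le\bigl(\max_j(1-\lambda_{2j})\bigr)\sum(1-\lambda_{2j})\le(\rho^*_{2k,s})^2$; both routes give exactly the same constant. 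A genuine merit of your write-up is step (i), where you justify $\lambda_{2j}\le 1$ via the Funk--Hecke representation of $\lambda_{2j}$ together with the bound $|C_{2j}^{(\alpha)}(t)|\le C_{2j}^{(\alpha)}(1)$ on $[-1,1]$ and the nonnegativity of $h_s=q_s^2$ --- the paper uses this fact implicitly (it is needed both to conclude that each summand in $\rho^*_{2k,s}$ is nonnegative and to drop the absolute values), so your explicit derivation is a small but worthwhile addition.
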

\begin{proof}
First of all, the following inequality holds
\[
\tau_{2k}(h_s) \leq \sqrt{D_{2k}} \sqrt{\sum\limits_{j=0}^k \left( \frac{1}{\lambda_{2j}} - 1 \right)^2} \leq \sqrt{D_{2k}} \sum\limits_{j=0}^k \left| \frac{1}{\lambda_{2s}} -1 \right|.
\]	
Note that for every $\ell=0,1,\dots, k$, $1-\lambda_{2\ell} \leq \rho_{2k,s}^* = \sum_{j=0}^k (1-\lambda_{2j})$, then $\lambda_{2\ell} \geq 1-\rho_{2k,s}^*>0$. It follows that
\[
\sum\limits_{j=0}^k \left| \frac{1}{\lambda_{2s}} -1 \right| =  \sum\limits_{j=0}^k \frac{1-\lambda_{2s}}{\lambda_{2s}} \leq \frac{1}{1-\rho_{2k,s}^*} \sum\limits_{j=0}^k (1-\lambda_{2j}) = \frac{\rho_{2k,s}^*}{1-\rho_{2k,s}^*}.
\]
The desired inequality is obtained by putting the previous two together.
\end{proof}

We can now complete the proof of Corollary~\ref{cor: various_g} (2).

\begin{proof}[Proof of Corollary~\ref{cor: various_g} (2)] 
Since $\rho^*_{2k,s} \leq k^2n^2 O(\tfrac{1}{s^2})$, there exists an $S$ such that $\rho^*_{2k,s}\leq 1/2$ for $s\geq S$. Consequently, if $s\geq S$ the following holds
\[
\tau_{2k}(h_s) \leq \sqrt{D_{2k}} \frac{\rho_{2k,s}^*}{1-\rho_{2k,s}^*} \leq 2\sqrt{D_{2k}} \rho_{2k,s}^*\leq 2\sqrt{D_{2k}} k^2n^2 O(\tfrac{1}{s^2}). 
\]
In particular, as $s\to\infty$, $\tau_{2k}(h_s) \leq  \sqrt{D_{2k}} k^2n^2 O(\tfrac{1}{s^2})$.
\end{proof}

To summarize, we can obtain a polynomial sequence $(h_s)_s$ such that their corresponding Frobenius thresholds converge to 0 with a rate of $1/s^2$ as $s\to \infty$. Furthermore, this sequence can be calculated explicitly (and free of optimization) in terms of the solution to an eigenvalue problem. Specifically, for a fixed $s$ the coefficients $\lambda_{\ell}$ of $h_s$ in the basis of normalized Gegenbauer polynomials are given by 
\begin{align}
	\lambda_\ell = \frac{C_\ell(1)^2 N_0^2}{\dim(H_\ell)^2N_\ell^2} (e^*)^t A_0^{-1}A_\ell e^*, \label{eq:fawzi-eigen}
\end{align}
where $e^*$ is any normalized eigenvector of $T_{2k,s}$ for its maximum eigenvalue $\lambda_{max}(T_{2k,s})$.

We now assume a fixed degree $2s$ (expressing a limited amount of available computational resources) and ask whether we can choose an {\it optimal} polynomial $g(t)$ of degree $2s$, in the sense of having minimal Frobenius threshold $\tau_{2k}(g)$. Our main result is Theorem~\ref{Thm: Optimal_g} which shows that this problem is essentially a convex optimization problem (and thus amenable to standard techniques~\cite{Bertsekas}).

\begin{lemma}\label{lem: moduli_g} The set $\calG_{2s}$, consisting of univariate polynomials $g(t)$ of degree $\leq 2s$ that are  even, nonnegative in $[-1,1]$ and satisfy
$g(0)=\frac{1}{\mu(S)}$ is a semidefinitely representable set.
\end{lemma}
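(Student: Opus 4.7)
The plan is to combine two classical facts to exhibit $\calG_{2s}$ explicitly as the projection of a spectrahedron. First, by the Markov--Luk\'acs theorem, a univariate polynomial $g(t)$ of degree at most $2s$ is nonnegative on $[-1,1]$ if and only if it admits a representation
\[
g(t) = \sigma_0(t) + (1-t^2)\sigma_1(t),
\]
where $\sigma_0$ and $\sigma_1$ are sums of squares of polynomials of degree at most $s$ and $s-1$, respectively. Second, the cone of univariate sums of squares is semidefinitely representable via Gram matrices: a polynomial $\sigma(t)$ of degree at most $2d$ is SOS if and only if there exists a positive semidefinite matrix $Q$ with $\sigma(t) = v_d(t)^\top Q v_d(t)$, where $v_d(t) = (1,t,t^2,\dots,t^d)^\top$. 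Expanding the right-hand side and matching coefficients of $t^i$ on both sides translates the identity into a finite collection of linear equations in the entries of $Q$ and the coefficients of $\sigma$.

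With these tools in hand, the proof consists of three bookkeeping steps. First, I introduce auxiliary PSD matrices $Q_0$ (of size $s+1$) and $Q_1$ (of size $s$) and impose the linear equations coming from matching the coefficients of $g(t)$ with those of $v_s(t)^\top Q_0 v_s(t) + (1-t^2) v_{s-1}(t)^\top Q_1 v_{s-1}(t)$. This already realizes the set of polynomials of degree $\leq 2s$ which are nonnegative on $[-1,1]$ as the coordinate projection of a spectrahedron in the variables $(g, Q_0, Q_1)$. Second, I add the affine constraints carving out $\calG_{2s}$ inside this set: evenness is the vanishing of the coefficients of $t, t^3, \dots, t^{2s-1}$ (a finite system of linear equations on the coefficients of $g$), and the normalization $g(0)=1/\mu(S)$ is one further linear equation. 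Third, intersecting a spectrahedron with an affine subspace and then projecting yields again a projected spectrahedron, which is the definition of a semidefinitely representable set.

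There is no substantive obstacle; the only nontrivial ingredient is the Markov--Luk\'acs theorem itself, which I would cite from a standard reference rather than reprove. Everything else is the routine translation of ``SOS'' into ``PSD Gram matrix'' together with the observation that affine constraints and coordinate projections preserve semidefinite representability.
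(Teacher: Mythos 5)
Your proof is correct and follows essentially the same strategy as the paper: encode $[-1,1]$-nonnegativity via an SOS certificate translated into PSD Gram matrix conditions, then carve out evenness and the normalization $g(0)=1/\mu(S)$ by affine constraints, and observe that intersections with affine subspaces and coordinate projections preserve semidefinite representability. The only cosmetic differences are that the paper uses the three-term certificate $g = s_1 + (1+t)b_1 + (1-t)b_2$ rather than your two-term Markov--Luk\'acs form $g=\sigma_0+(1-t^2)\sigma_1$, and it realizes the even polynomials as the image of the nonnegativity cone under the linear even-part map $h\mapsto \tfrac{1}{2}(h(t)+h(-t))$ rather than intersecting with the affine subspace where the odd coefficients vanish; both variants are valid and equally routine.
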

\begin{proof} A polynomial $h(t)$ is nonnegative in $[-1,1]$ iff it can be written as $h(t)=s_1(t)+(x+1)b_1(t)+(1-x)b_2(t)$ where $s_1$, $b_1$ and $b_2$ are sums of squares of polynomials of degrees at most $s$,$s-1$ and $s-1$ respectively. Equivalently such polynomials are the image of the triples $(A,B_1,B_2)$ of symmetric positive semidefinite matrices with $s+1$, $s$ and $s$ rows respectively, under the linear map
\[\pi(A,B_1,B_2) = \vec{m}^tA\vec{m}+(x+1)\vec{n}^tB_1\vec{n}+ (1-x)\vec{n}^tB_2\vec{n}\]
where $\vec{m}$ (resp. $\vec{n}$) is the vector of monomials $(1,t,t^2,\dots,t^s)$ (resp. $(1,t,t^2,\dots,t^{s-1})$).
As a result the set $N$ of polynomials $h(t)$ nonnegative in $[-1,1]$ of degree $\leq 2s$ is semidefinitely representable. If $h(t)$ is any such polynomial then $\frac{h(t)+h(-t)}{2}$ is its even part and since this operation is linear in $h$ we conclude that the set $W$ of even polynomials in $N$ is also an SDr set. The condition that any such polynomial has a prescribed value at $0$ is linear and therefore $\calG_{2s}$ is the intersection of $W$ with an affine hyperplane and therefore an SDr set as claimed.
\end{proof}

\begin{theorem}\label{Thm: Optimal_g} For all sufficiently large integers $s$ the optimization problem 
\[\min_{g\in \calG_{2s}} \tau_{2k}(g)\] is equivalent to a convex programming problem.
\end{theorem}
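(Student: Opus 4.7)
The plan is to recognize $\min_{g \in \calG_{2s}} \tau_{2k}(g)$ as the minimization of a convex function over a semidefinitely representable (SDr) feasible set, once I restrict to the region where the objective is convex; the hypothesis that $s$ is sufficiently large will be used precisely to ensure that this restriction does not change the infimum.

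First, observe that the coefficients $\lambda_{2j}(g)$ in the expansion $g(t) = \sum_{j=0}^{s} \lambda_{2j}(g)\, g_{2j}(t)$ are \emph{linear} functionals of $g$, since they arise by integration of $g$ against a fixed dual basis to the normalized Gegenbauer polynomials. Combined with Lemma~\ref{lem: moduli_g}, this means that for any constants $0 < a < b$ the restricted set
\[
\calG_{2s}' := \calG_{2s} \cap \bigl\{g : a \leq \lambda_{2j}(g) \leq b \text{ for } j = 0, 1, \dots, k\bigr\}
\]
is again SDr, being the intersection of an SDr set with finitely many linear inequalities in the coefficients of $g$.

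Second, a direct computation shows that the scalar function $\phi(\lambda) := (1/\lambda - 1)^2$ has second derivative $\phi''(\lambda) = 2(3-2\lambda)/\lambda^4$, and is therefore strictly convex on $(0, 3/2)$. Fixing $a = 1/2$ and $b < 3/2$, each composition $g \mapsto \phi(\lambda_{2j}(g))$ is convex on $\calG_{2s}'$, and so
\[
\tau_{2k}(g)^2 = \sum_{j=0}^{k} \dim(H_{2j})\, \phi(\lambda_{2j}(g))
\]
is a nonnegative linear combination of convex functions of $g$, hence convex on $\calG_{2s}'$. Since $\sqrt{\cdot}$ is monotone on $[0, \infty)$, minimizing $\tau_{2k}$ over $\calG_{2s}'$ is equivalent to minimizing $\tau_{2k}^2$ over $\calG_{2s}'$, which is a convex program (convex objective over an SDr feasible set).

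Third, I will show that for all sufficiently large $s$ the infima of $\tau_{2k}$ over $\calG_{2s}$ and over $\calG_{2s}'$ coincide. By Corollary~\ref{cor: various_g}~(2) the explicit Fang--Fawzi kernel $h_s$ lies in $\calG_{2s}$ and satisfies $\tau_{2k}(h_s) \to 0$; hence $\inf_{\calG_{2s}} \tau_{2k} \to 0$ as $s \to \infty$. For any $g \in \calG_{2s}$ with $\tau_{2k}(g) \leq \epsilon < 1/3$, the definition of the Frobenius threshold forces $|1/\lambda_{2j}(g) - 1| \leq \epsilon$ for every $j \leq k$, and therefore $\lambda_{2j}(g) \in [1/(1+\epsilon),\, 1/(1-\epsilon)] \subset [1/2,\, 3/2)$. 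Choosing $s$ large enough that $\inf_{\calG_{2s}} \tau_{2k} < 1/3$ ensures that every near-optimal $g$ lies in $\calG_{2s}'$, so the two infima agree and the original problem is equivalent to the convex program above.

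The main obstacle is this last step: justifying rigorously that the infimum of the original (a priori non-convex) problem lies in the convexity region of $\phi$. Corollary~\ref{cor: various_g}~(2) provides an explicit feasible family with vanishing Frobenius threshold, which makes the required bound immediate; without such an explicit family one would need a more delicate coercivity or compactness argument to rule out minimizing sequences escaping to the boundary of the convexity region of $\phi$.
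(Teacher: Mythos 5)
Your argument follows the same overall strategy as the paper: observe that the coefficients $\lambda_{2j}(g)$ depend affinely on $g$, restrict to a semidefinitely representable subset $\calG_{2s}'$ where the pointwise objective $\phi(\lambda)=(1/\lambda-1)^2$ is convex, and use the convergence $\tau_{2k}\to 0$ to argue that for large $s$ the restriction does not change the optimum. So in outline the proof is the same.

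There are, however, two points where your write-up is genuinely more careful than the paper's. First, the paper asserts directly that $\tau_{2k}(g)$ is convex on $\calG_{2s}'$, but this inference does not follow from the convexity of $\phi$: $\tau_{2k}$ is the Euclidean norm of the vector $\bigl(\sqrt{\dim H_{2j}}\,(1/\lambda_{2j}-1)\bigr)_{j}$, and a norm of convex functions of mixed sign need not be convex (already $|1/\lambda-1|$ fails to be convex on $(1,3/2)$). Your route is the correct repair: $\tau_{2k}^2$ is a nonnegative linear combination of convex terms, hence convex on $\calG_{2s}'$, and minimizing $\tau_{2k}$ is equivalent to minimizing $\tau_{2k}^2$ by monotonicity of the square root, yielding a bona fide convex program. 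Second, the paper leaves implicit why the restriction is harmless for large $s$; you make the dependence on Corollary~\ref{cor: various_g}~(2) explicit by observing that once $\inf_{\calG_{2s}}\tau_{2k}<1/3$, any near-optimal $g$ has $\lambda_{2j}(g)\in[1/(1+\epsilon),1/(1-\epsilon)]\subset[1/2,3/2)$ for all $j\le k$, and hence lies in $\calG_{2s}'$. Your restriction $\lambda_{2j}\ge 1/2$ also cleanly avoids the degeneracy at $\lambda_{2j}=0$ that the paper's bound $0\le\lambda_{2j}$ technically permits. One caveat worth noting, which applies to the paper's proof as much as to yours: verifying that the Fang--Fawzi kernel (after symmetrization) actually satisfies the normalization $g(0)=1/\mu(S)$ imposed in Lemma~\ref{lem: moduli_g} requires a rescaling argument that neither write-up spells out.
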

\begin{proof} The coefficients $\lambda_{2j}(g)$ expressing a polynomial $g$ as a linear combination of weighted Gegenbauer polynomials are a linear function of $g(t)$ because they can be computed by integration, using the well-known orthogonality of Gegenbauer polynomials.  Furthermore, the function $(\frac{1}{\lambda}-1)^2$ is a convex function of $\lambda$ for $0<\lambda<3/2$. We conclude that the Frobenius Threshold $\tau_{2k}(g)$ is a convex function of $g$ in the convex SDr set $\calG_{2s}'$ consisting of the polynomials $\calG_{2s}$ such that $0\leq \lambda_{2j}(g)\leq \frac{3}{2}$. If any of these inequalities  fails then the inequality $\tau_{2k}(g)\geq \frac{1}{3}$ holds so any optima of the problem must lie in the set $\calG_{2s}'$ for sufficiently large $s$ as claimed.
\end{proof}

\subsection{Harmonic hierarchies for moment problems.}\label{Sec: HHMoments}

As mentioned in the introduction, Tchakaloff's Theorem~\cite{Tchakaloff} proves that the cone $P_{2k}^*\subseteq R_{2k}^*$ dual to $P_{2k}$ consists of the moment operators of degree $2k$ for all Borel measures on $S$. More precisely, the elements of $P_{2k}^*$ are the linear operators $\ell: R_{2k}\rightarrow \RR$ which satisfy
\[\forall f\in R_{2k}\left(\ell(f)=\int_S f(y)d\nu(y)\right)\]
for some Borel measure $\nu$ on $S$. It is therefore a problem of much interest to characterize or to approximate $P_{2k}^*$. For every integer $s$, Construction~\ref{Const: HH} provides us with polyhedral cones $A_s$ and $Q_s$ in $R_{2k}$ satisfying the inclusions
\[A_s\subseteq P_{2k}\subseteq Q_s\]
Their dual convex cones in $R_{2k}^*$ therefore satisfy
\[A_s^*\supseteq P_{2k}^*\supseteq Q_s^*\]
providing us with an (outer) {\it Harmonic Hierarchy for moments} $(A_s^*)_{s\in \mathbb{N}}$. The following Theorem provides a description of the cones $A_s^*$ and $Q_s^*$ amenable to computation. For a point $y\in S$ define the operator $L_y\in R_{2k}^*$ as
\[L_{y}:=\left\langle \sum_{j=0}^k \lambda^{(s)}_{2j}\|x\|^{2(k-j)}\phi^{2j}_y(x),\bullet\right\rangle\]
where the $\lambda_{2j}^{(s)}$ are the coefficients of $g_s$ in its Gegenbauer expansion (as in Lemma~\ref{lem: diagonal}) and $\phi_y^{2j}(x)$ is the homogeneous polynomial defined in Theorem~\ref{Thm: Zonal}.

\begin{theorem}\label{Thm: HHMoments}
The following statements hold: 
\begin{enumerate}
\item 
For every positive integer $s$ and $d_s:={\rm deg}(g_s)$ we have:
\begin{enumerate}
\item The polyhedral cone $Q_s^*$ is the convex hull of the point evaluations at the cubature nodes $X_{k+d_s}$.

\item If $\hat{\Gamma}_{g_s}$ is invertible then the polyhedral cone $A_s^*$ is given by \[A_s^*={\rm Conv}\left(\left\{L_y: y\in X_{k+d_s}\right\}\right).\]
\end{enumerate}
\item If $\lim_{s\rightarrow \infty} \tau_{2k}(g_s)=0$ then the hiererachy $(A_s^*)_{s\in \mathbb{N}}$ and $(Q_s^*)_{s\in \mathbb{N}}$ converge to $P_{2k}^*$ in the sense that the following equalities hold
\[\bigcap_{s=0}^{\infty} A_s^* = P_{2k}^*=\overline{\bigcup_{s=0}^{\infty} Q_s^*}.\]
\end{enumerate}
\end{theorem}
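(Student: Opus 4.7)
My plan is to deduce the three assertions in sequence, each reducing to a general cone-duality principle combined with results already established in the paper. The key tools are the standard facts from finite-dimensional convex duality: for any closed convex cone $C$ and invertible linear map $T$ one has $T(C)^* = (T^*)^{-1}(C^*)$; for any family $\{C_s\}$ of closed convex cones the identities $(\overline{\bigcup_s C_s})^* = \bigcap_s C_s^*$ and $(\bigcap_s C_s)^* = \overline{\bigcup_s C_s^*}$ hold; and the Minkowski--Weyl duality identifying the dual of a cone cut out by finitely many linear inequalities as the conic hull of the corresponding inequality functionals.

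For Part (1)(a), the cone $Q_s$ is defined by finitely many inequalities $\mathrm{ev}_y(F) \geq 0$ for $y \in X_{2(k+d_s)}$, so Minkowski--Weyl immediately gives $Q_s^*$ as the conic hull of the evaluation functionals $\mathrm{ev}_y$. For Part (1)(b), the invertibility of $\hat{\Gamma}_{g_s}$ together with $A_s = \hat{\Gamma}_{g_s}(Q_s)$ yields $A_s^* = (\hat{\Gamma}_{g_s}^{-1})^*(Q_s^*)$, whose extreme rays are exactly the functionals $f \mapsto \hat{\Gamma}_{g_s}^{-1}(f)(y)$ for $y$ in the cubature node set. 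To recognize these with the operators $L_y$ of Construction~\ref{const: HH_dual}, I would first rewrite $\mathrm{ev}_y$ via the reproducing kernel: using the harmonic decomposition $f = \sum_j \|x\|^{2(k-j)} f_{2j}$ and the fact that $\|y\|=1$, one has $\mathrm{ev}_y(f) = \sum_j f_{2j}(y)$, and by the reproducing property (Theorem~\ref{Thm: Zonal}) this equals $\int_S \bigl(\sum_j \|x\|^{2(k-j)}\phi_y^{2j}(x)\bigr) f(x)\, d\mu(x)$. Applying $\hat{\Gamma}_{g_s}^{-1}$, which by Lemma~\ref{lem: diagonal} acts diagonally (with eigenvalues $1/\lambda_{2j}^{(s)}$) on the harmonic components, produces the stated formula for $L_y$ up to the natural identification of $R_{2k}^*$ with $R_{2k}$ via the $L^2(S)$ inner product.

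For Part (2), the equality $\bigcap_s A_s^* = P_{2k}^*$ follows at once by dualizing the primal convergence $\overline{\bigcup_s A_s} = P_{2k}$ established in Theorem~\ref{Thm: bounds}(3). Dually, $\overline{\bigcup_s Q_s^*} = (\bigcap_s Q_s)^*$, so the remaining equality reduces to the primal statement $\bigcap_s Q_s = P_{2k}$. The inclusion $\supseteq$ is immediate; for the reverse, a polynomial lying in every $Q_s$ is nonnegative on every cubature node set $X_{2(k+d_s)}$, and the union of these sets is dense in $S$. This density can be read off from Corollary~\ref{cor: new_quad}: as the degree grows, Gauss--Jacobi nodes are dense in $[-1,1]$, and the iterative product construction transfers this density to $S$. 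Continuity then forces nonnegativity on all of $S$, i.e.\ membership in $P_{2k}$.

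I expect the only delicate step to be the explicit identification of the extreme generators of $A_s^*$ with the operators $L_y$ in Part (1)(b): this requires careful bookkeeping of the reproducing-kernel representation of $\mathrm{ev}_y$ combined with the diagonal action of $\hat{\Gamma}_{g_s}^{-1}$ in the harmonic basis. Everything else is a direct application of standard cone duality together with the primal convergence already supplied by Theorem~\ref{Thm: bounds} and the structural density of cubature nodes furnished by Corollary~\ref{cor: new_quad}.
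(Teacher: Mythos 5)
Your global strategy---reduce everything to standard cone duality plus the primal convergence results---is sound, and parts (1)(a) and (2) are handled essentially correctly (your density-of-nodes argument for $\bigcap_s Q_s = P_{2k}$ is a valid alternative to the route the paper hints at, which instead observes that $\hat{\Gamma}_{h_s}(f)\in P_{2k}$ for $f\in\bigcap_s Q_s$ and $\hat{\Gamma}_{h_s}(f)\to f$ since $\tau_{2k}(h_s)\to 0$).

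The gap is in (1)(b), precisely at the step you yourself flag as ``the only delicate step.'' Your duality bookkeeping correctly gives $A_s^* = (\hat{\Gamma}_{g_s}^*)^{-1}(Q_s^*)$, so the generators of $A_s^*$ are the functionals $f\mapsto \hat{\Gamma}_{g_s}^{-1}(f)(y)$; passing through the reproducing kernel and the diagonal action of $\hat{\Gamma}_{g_s}^{-1}$ on harmonic components, the representing polynomial of this functional is $\sum_j (\lambda_{2j}^{(s)})^{-1}\,\|x\|^{2(k-j)}\phi_y^{2j}(x)$. But the stated $L_y$ in Construction~\ref{const: HH_dual} carries the coefficients $\lambda_{2j}^{(s)}$, not their reciprocals, and thus satisfies $L_y(f)=\hat{\Gamma}_{g_s}(f)(y)$ rather than $\hat{\Gamma}_{g_s}^{-1}(f)(y)$. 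So your claim that the computation ``produces the stated formula for $L_y$'' does not check out---your own derivation yields the inverse eigenvalues, and asserting agreement papers over an unresolved $\lambda$-versus-$\lambda^{-1}$ discrepancy. The paper's proof takes a different and shorter tack: it verifies the identity $L_y(f)=\hat{\Gamma}_{g_s}(f)(y)$ directly (via Theorem~\ref{Thm: Zonal}), infers $\mathrm{Conv}(L_y:y\in X)^* = \{f: \hat{\Gamma}_{g_s}(f)\in Q_s\}$, and invokes bi-duality once. Either way, this step needs to be reconciled carefully with $A_s:=\hat{\Gamma}_{g_s}(Q_s)$ rather than $\hat{\Gamma}_{g_s}^{-1}(Q_s)$; you should not present the reconciliation as automatic, and in a careful write-up you would have to confront and resolve the direction of the operator explicitly.
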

\begin{proof} $(1a)$ The cone $Q_s$ is defined by nonnegativity of the evaluations at cubature nodes $X_{k+d_s}$. $(1b)$ By Theorem~\ref{Thm: Zonal} for every polynomial $f\in R_{2k}$ and $y\in S$ the equality $L_y(f)=\hat{\Gamma}_{g_{s}}(f)(y)$
holds. It follows that $f\in {\rm Conv}(L_y:y\in X_{k+d_s})^*$ if and only if $\hat{\Gamma}_{g_{s}}(f)\in Q_s$ proving the claim by the bi-duality Theorem of convex geometry. Part $(2)$ is immediate from Theorem~\ref{Thm: bounds} and bi-duality.
\end{proof}

\begin{proof}[Proof of Theorem~\ref{Harmonic_Moments}] It is immediate from Theorem~\ref{Thm: HHMoments}.
\end{proof}

\section{A Julia package for harmonic hierarchies.}\label{Sec:Julia}

In this Section we show some numerical examples computed with our Julia package for Harmonic Hierarchies available at~\href{https://github.com/SergioCS147/HarmonicPolya.git}{github}. The package has the following capabilities:

\begin{enumerate}
\item Computing the Gauss-product cubature rules from Section~\ref{Sec: Gauss-product} for $S\subseteq \RR^n$ and any degree $2s$.
\item Computing the harmonic decomposition of polynomials using the algorithm of Axler and Ramey \cite{AR}. 
\item Computing the upper bound for polynomial minimization problems on spheres from the work of Martinez et al~\cite{Piazzon-Vianello-Q} discussed in Remark~\ref{Rem:cubature}. 
\item Computing our optimization-free lower bound for minimization problems on spheres (see Section~\ref{Sec: optimization-free}) using the kernels appearing in Corollary~\ref{cor: various_g}. 
\end{enumerate}

Figures~\ref{fig:upperb} and ~\ref{fig:squares_polys} respectively show upper and lower bounds for the minima of the Motzkin and Robinson polynomials calculated using our package. The lower bounds implemented both of the sequences $(g_s)_{s\in\mathbb{N}}$ of Corollary~\ref{cor: various_g} parts $(1)$ and $(2)$, figures~\ref{fig:1a} and ~\ref{fig:1b} respectively. 

The Motzkin and Robinson polynomials are given by the formulas
\[m(x_1,x_2,x_3) = x_1^2x_2^4+ x_1^4x_2^2+x_3^6-3x_1^2x_2^2x_3^2,\]
\begin{align*}r(x_1,x_2,x_3,x_4) &= x_1^2(x_1-x_4)^2 + x_2^2(x_2-x_4)^2 + x_3^2(x_3-x_4)^2 \\
	&\quad + 2x_1x_2x_3(x_1+x_2+x_3-2x_4),\end{align*}
respectively. These are well-known nonnegative polynomials with zeroes. The figures show that the practical behavior of our optimization-free lower bound closely mirrors the predicted theoretical behavior. 

\begin{figure}[b]
	\centering
	\includegraphics[width=0.65\linewidth]{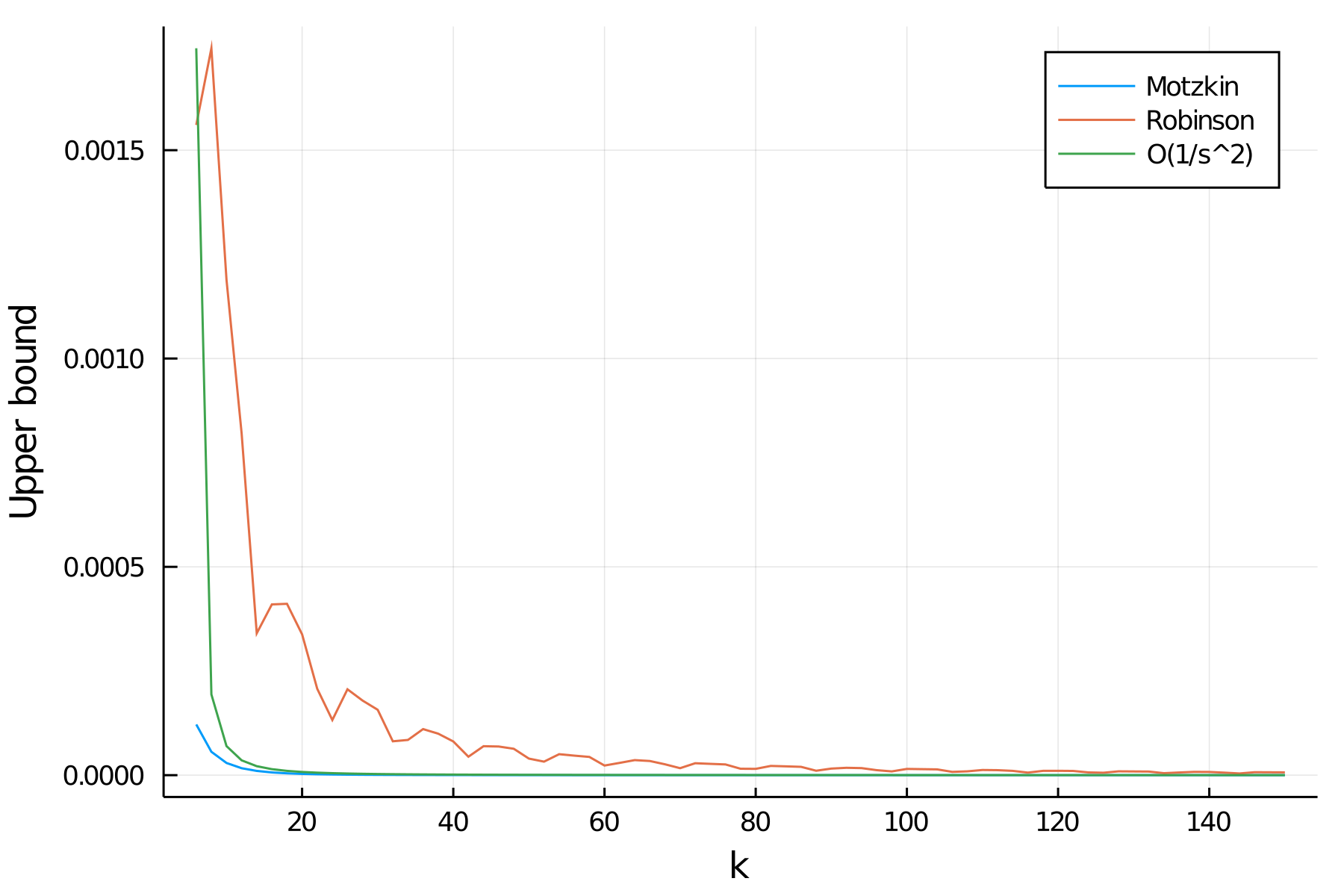}
	\caption{Figure 1 shows the upper bounds calculated for the Motzkin and Robinson polynomials employing our Julia package's implementation of the method described in Remark~\ref{Rem:cubature}. We also include the plot of a $O(-1/s^2)$ in order to compare its behavior to the theoretical convergence rate. }
	\label{fig:upperb}
\end{figure}

\begin{figure}[h!]
	\centering
	\begin{subfigure}{0.7\textwidth}
		\includegraphics[width=\linewidth]{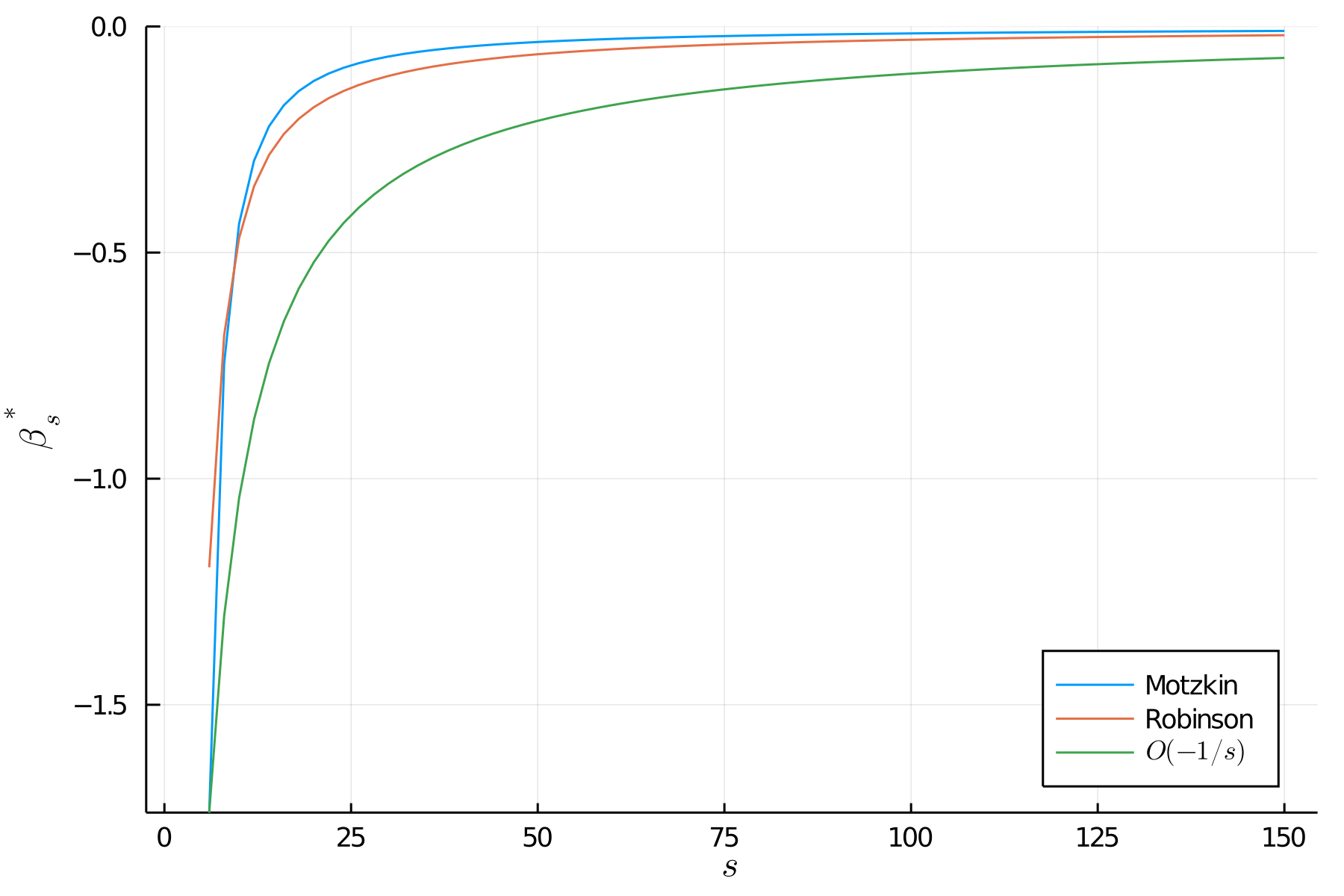}
		\caption{Using the pure powers sequence.} \label{fig:1a}
	\end{subfigure}
	\begin{subfigure}{0.7\textwidth}
		\includegraphics[width=\linewidth]{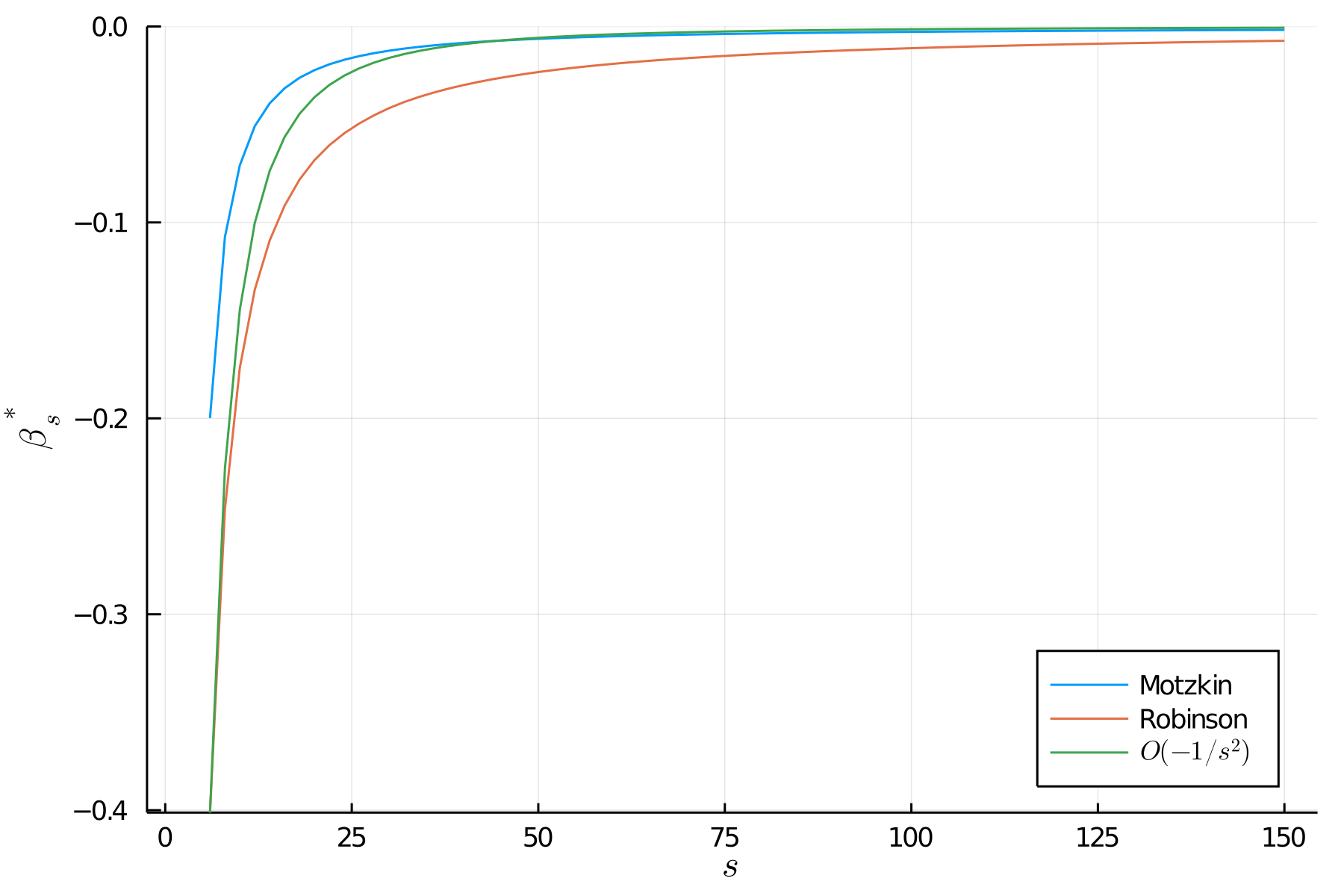}
		\caption{Using the Fang-Fawzi sequence.} \label{fig:1b}
	\end{subfigure}%
	\caption{Figures 2a and 2b show the lower bounds calculated for the Motzkin and Robinson polynomials employing our Julia package's implementation of the squares and Fang-Fawzi sequence of kernels from Corollary~\ref{cor: various_g}(1) and (2) respectively. We also include the plot of $O(-1/s)$ and $O(-1/s^2)$ functions respectively in order to compare the behavior of the obtained lower bounds and the theoretical convergence rate. }
	\label{fig:squares_polys}
\end{figure}

\newpage

\begin{bibdiv}
  \begin{biblist}

\bib{AAA}{article}{
   author={Ahmadi, Amir Ali},
   author={Majumdar, Anirudha},
   title={DSOS and SDSOS optimization: more tractable alternatives to sum of
   squares and semidefinite optimization},
   journal={SIAM J. Appl. Algebra Geom.},
   volume={3},
   date={2019},
   number={2},
   pages={193--230},
   review={\MR{3939321}},
   doi={10.1137/18M118935X},
}

\bib{Axler}{book}{
   author={Axler, Sheldon},
   author={Bourdon, Paul},
   author={Ramey, Wade},
   title={Harmonic function theory},
   series={Graduate Texts in Mathematics},
   volume={137},
   edition={2},
   publisher={Springer-Verlag, New York},
   date={2001},
   pages={xii+259},
   isbn={0-387-95218-7},
   review={\MR{1805196}},
   doi={10.1007/978-1-4757-8137-3},
}

\bib{AR}{article}{
	title={Harmonic polynomials and Dirichlet-type problems},
	author={Axler, Sheldon},
	author={Ramey, Wade},
	journal={Proceedings of the American Mathematical Society},
	pages={3765--3773},
	year={1995},
	publisher={JSTOR}
}

\bib{BGP}{article}{
   author={Blekherman, Grigoriy},
   author={Gouveia, Jo\~{a}o},
   author={Pfeiffer, James},
   title={Sums of squares on the hypercube},
   journal={Math. Z.},
   volume={284},
   date={2016},
   number={1-2},
   pages={41--54},
   issn={0025-5874},
   review={\MR{3545483}},
   doi={10.1007/s00209-016-1644-7},
}

\bib{BSV}{article}{
   author={Blekherman, Grigoriy},
   author={Smith, Gregory G.},
   author={Velasco, Mauricio},
   title={Sums of squares and varieties of minimal degree},
   journal={J. Amer. Math. Soc.},
   volume={29},
   date={2016},
   number={3},
   pages={893--913},
   issn={0894-0347},
   review={\MR{3486176}},
   doi={10.1090/jams/847},
}

\bib{BSV2}{article}{
   author={Blekherman, Grigoriy},
   author={Smith, Gregory G.},
   author={Velasco, Mauricio},
   title={Sharp degree bounds for sum-of-squares certificates on projective
   curves},
   language={English, with English and French summaries},
   journal={J. Math. Pures Appl. (9)},
   volume={129},
   date={2019},
   pages={61--86},
   issn={0021-7824},
   review={\MR{3998790}},
   doi={10.1016/j.matpur.2018.12.010},
}

\bib{BSiV}{article}{
   author={Blekherman, Grigoriy},
   author={Sinn, Rainer},
   author={Velasco, Mauricio},
   title={Do sums of squares dream of free resolutions?},
   journal={SIAM J. Appl. Algebra Geom.},
   volume={1},
   date={2017},
   number={1},
   pages={175--199},
   review={\MR{3633773}},
   doi={10.1137/16M1084560},
}

\bib{BConv}{article}{
   author={Blekherman, Grigoriy},
   title={Convexity properties of the cone of nonnegative polynomials},
   journal={Discrete Comput. Geom.},
   volume={32},
   date={2004},
   number={3},
   pages={345--371},
   issn={0179-5376},
   review={\MR{2081630}},
   doi={10.1007/s00454-004-1090-x},
}

\bib{Bertsekas}{book}{
   author={Bertsekas, Dimitri P.},
   title={Nonlinear programming},
   series={Athena Scientific Optimization and Computation Series},
   edition={3},
   publisher={Athena Scientific, Belmont, MA},
   date={2016},
   pages={xviii+861},
   isbn={978-1-886529-05-2},
   isbn={1-886529-05-1},
   review={\MR{3587371}},
}

\bib{BPT}{book}{
     title = {Semidefinite optimization and convex algebraic geometry},
    SERIES = {MOS-SIAM Series on Optimization},
    VOLUME = {13},
    author = {Blekherman, Grigoriy} 
    author = {Parrilo, Pablo A.}
    author = {Thomas, Rekha R.},
 publisher = {Society for Industrial and Applied Mathematics (SIAM),
              Philadelphia, PA; Mathematical Optimization Society,
              Philadelphia, PA},
      year = {2013},
     pages = {xx+476},
      isbn = {978-1-611972-28-3},
   MRCLASS = {90-06 (14-06)},
  MRNUMBER = {3075433},
MRREVIEWER = {Franz Rendl},
}

\bib{Cools1}{article}{
   author={Cools, Ronald},
   title={Constructing cubature formulae: the science behind the art},
   conference={
      title={Acta numerica, 1997},
   },
   book={
      series={Acta Numer.},
      volume={6},
      publisher={Cambridge Univ. Press, Cambridge},
   },
   date={1997},
   pages={1--54},
   review={\MR{1489255}},
   doi={10.1017/S0962492900002701},
}

\bib{Cools2}{article}{
   author={Cools, Ronald},
   title={An encyclopaedia of cubature formulas},
   note={Numerical integration and its complexity (Oberwolfach, 2001)},
   journal={J. Complexity},
   volume={19},
   date={2003},
   number={3},
   pages={445--453},
   issn={0885-064X},
   review={\MR{1984127}},
   doi={10.1016/S0885-064X(03)00011-6},
}

\bib{FangFawzi}{article}{
   author={Fang, Kun},
   author={Fauzi, Hamza},
   title={The sum-of-squares hierarchy on the sphere and applications in
   quantum information theory},
   journal={Math. Program.},
   volume={190},
   date={2021},
   number={1-2, Ser. A},
   pages={331--360},
   issn={0025-5610},
   review={\MR{4322645}},
   doi={10.1007/s10107-020-01537-7},
}

\bib{dKLZ}{article}{
   author={de Klerk, Etienne},
   author={Laurent, Monique},
   author={Sun, Zhao},
   title={Convergence analysis for Lasserre's measure-based hierarchy of
   upper bounds for polynomial optimization},
   journal={Math. Program.},
   volume={162},
   date={2017},
   number={1-2, Ser. A},
   pages={363--392},
   issn={0025-5610},
   review={\MR{3612943}},
   doi={10.1007/s10107-016-1043-1},
}

\bib{Alperen}{article}{
   author={Erg\"{u}r, Alperen A.},
   title={Approximating nonnnegative polynomials via spectral
   sparsification},
   journal={SIAM J. Optim.},
   volume={29},
   date={2019},
   number={1},
   pages={852--873},
   issn={1052-6234},
   review={\MR{3925523}},
   doi={10.1137/17M1121743},
}

\bib{Helgason}{book}{
   author={Helgason, Sigurdur},
   title={Groups and geometric analysis},
   series={Mathematical Surveys and Monographs},
   volume={83},
   note={Integral geometry, invariant differential operators, and spherical
   functions;
   Corrected reprint of the 1984 original},
   publisher={American Mathematical Society, Providence, RI},
   date={2000},
   pages={xxii+667},
   isbn={0-8218-2673-5},
   review={\MR{1790156}},
   doi={10.1090/surv/083},
}

\bib{LaurentSlot}{article}{
   author={Slot, Lucas},
   author={Laurent, Monique},
   title={Sum-of-squares hierarchies for binary polynomial optimization},
   conference={
      title={Integer programming and combinatorial optimization},
   },
   book={
      series={Lecture Notes in Comput. Sci.},
      volume={12707},
      publisher={Springer, Cham},
   },
   date={[2021] \copyright 2021},
   pages={43--57},
   review={\MR{4259092}},
}

\bib{Slot}{article}{
   author={Slot, Lucas},
   title={Sum-of-squares hierarchies for polynomial optimization and the Christoffel-Darboux kernel },
   date={2021},
   doi={https://arxiv.org/abs/2111.04610},
}

\bib{LBook}{book}{
   author={Lasserre, Jean Bernard},
   title={Moments, positive polynomials and their applications},
   series={Imperial College Press Optimization Series},
   volume={1},
   publisher={Imperial College Press, London},
   date={2010},
   pages={xxii+361},
   isbn={978-1-84816-445-1},
   isbn={1-84816-445-9},
   review={\MR{2589247}},
}

\bib{LBook2}{book}{
   author={Lasserre, Jean Bernard},
   title={An introduction to polynomial and semi-algebraic optimization},
   series={Cambridge Texts in Applied Mathematics},
   publisher={Cambridge University Press, Cambridge},
   date={2015},
   pages={xiv+339},
   isbn={978-1-107-63069-7},
   isbn={978-1-107-06057-9},
   review={\MR{3469431}},
   doi={10.1017/CBO9781107447226},
}

\bib{LBook3}{book}{
   author={Henrion, Didier},
   author={Korda, Milan},
   author={Lasserre, Jean B.},
   title={The moment-SOS hierarchy---lectures in probability, statistics,
   computational geometry, control and nonlinear PDEs},
   series={Series on Optimization and its Applications},
   volume={4},
   publisher={World Scientific Publishing Co. Pte. Ltd., Hackensack, NJ},
   date={[2021] \copyright 2021},
   pages={xvii+229},
   isbn={[9781786348531]},
   isbn={[9781786348548]},
   review={\MR{4274588}},
}

\bib{Lhierarchy}{article}{
   author={Lasserre, Jean B.},
   title={Global optimization with polynomials and the problem of moments},
   journal={SIAM J. Optim.},
   volume={11},
   date={2000/01},
   number={3},
   pages={796--817},
   issn={1052-6234},
   review={\MR{1814045}},
   doi={10.1137/S1052623400366802},
}

\bib{LOther}{article}{
   author={Lasserre, Jean B.},
   title={A new look at nonnegativity on closed sets and polynomial
   optimization},
   journal={SIAM J. Optim.},
   volume={21},
   date={2011},
   number={3},
   pages={864--885},
   issn={1052-6234},
   review={\MR{2837555}},
   doi={10.1137/100806990},
}

\bib{MBook}{book}{
   author={Marshall, Murray},
   title={Positive polynomials and sums of squares},
   series={Mathematical Surveys and Monographs},
   volume={146},
   publisher={American Mathematical Society, Providence, RI},
   date={2008},
   pages={xii+187},
   isbn={978-0-8218-4402-1},
   isbn={0-8218-4402-4},
   review={\MR{2383959}},
   doi={10.1090/surv/146},
}

\bib{Morimoto}{book}{
   author={Morimoto, Mitsuo},
   title={Analytic functionals on the sphere},
   series={Translations of Mathematical Monographs},
   volume={178},
   publisher={American Mathematical Society, Providence, RI},
   date={1998},
   pages={xii+160},
   isbn={0-8218-0585-1},
   review={\MR{1641900}},
   doi={10.1090/mmono/178},
}

\bib{Piazzon-Vianello-Q}{article}{
   author={Martinez, Angeles},
   author={Piazzon, Federico},
   author={Sommariva, Alvise},
   author={Vianello, Marco},
   title={Quadrature-based polynomial optimization},
   journal={Optim. Lett.},
   volume={14},
   date={2020},
   number={5},
   pages={1027--1036},
   issn={1862-4472},
   review={\MR{4114396}},
   doi={10.1007/s11590-019-01416-x},
}

\bib{Piazzon-Vianello-CGrid}{article}{
   author={Piazzon, Federico},
   author={Vianello, Marco},
   title={A note on total degree polynomial optimization by Chebyshev grids},
   journal={Optim. Lett.},
   volume={12},
   date={2018},
   number={1},
   pages={63--71},
   issn={1862-4472},
   review={\MR{3742954}},
   doi={10.1007/s11590-017-1166-1},
}

\bib{Pthesis}{article}{
   author={Parrilo, Pablo A.},
   title={Semidefinite programming relaxations for semialgebraic problems},
   note={Algebraic and geometric methods in discrete optimization},
   journal={Math. Program.},
   volume={96},
   date={2003},
   number={2, Ser. B},
   pages={293--320},
   issn={0025-5610},
   review={\MR{1993050}},
   doi={10.1007/s10107-003-0387-5},
}

\bib{Stroud}{book}{
   author={Stroud, A. H.},
   title={Approximate calculation of multiple integrals},
   series={Prentice-Hall Series in Automatic Computation},
   publisher={Prentice-Hall, Inc., Englewood Cliffs, N.J.},
   date={1971},
   pages={xiii+431},
   review={\MR{0327006}},
}

\bib{Szego}{book}{
   author={Szeg\H{o}, G\'{a}bor},
   title={Orthogonal polynomials},
   series={American Mathematical Society Colloquium Publications, Vol.
   XXIII},
   edition={4},
   publisher={American Mathematical Society, Providence, R.I.},
   date={1975},
   pages={xiii+432},
   review={\MR{0372517}},
}

\bib{Tchakaloff}{article}{
   author={Tchakaloff, L.},
   title={Formules g\'{e}n\'{e}rales de quadrature m\'{e}canique du type de Gauss},
   language={French},
   journal={Colloq. Math.},
   volume={5},
   date={1957},
   pages={69--73},
   issn={0010-1354},
   review={\MR{92885}},
   doi={10.4064/cm-5-1-69-73},
}

\bib{Hale-Townsend}{article}{
   author={Hale, Nicholas},
   author={Townsend, Alex},
   title={Fast and accurate computation of Gauss-Legendre and Gauss-Jacobi
   quadrature nodes and weights},
   journal={SIAM J. Sci. Comput.},
   volume={35},
   date={2013},
   number={2},
   pages={A652--A674},
   issn={1064-8275},
   review={\MR{3033086}},
   doi={10.1137/120889873},
}

\bib{Venkat}{article}{
   author={Murray, Riley},
   author={Chandrasekaran, Venkat},
   author={Wierman, Adam},
   title={Publisher correction to: ``Signomial and polynomial optimization
   via relative entropy and partial dualization''},
   journal={Math. Program. Comput.},
   volume={13},
   date={2021},
   number={2},
   pages={297--299},
   issn={1867-2949},
   review={\MR{4266926}},
   doi={10.1007/s12532-021-00201-1},
}

\bib{Taylor}{article}{
   author={Taylor, Mark},
   title={Cubature for the sphere and the discrete spherical harmonic
   transform},
   journal={SIAM J. Numer. Anal.},
   volume={32},
   date={1995},
   number={2},
   pages={667--670},
   issn={0036-1429},
   review={\MR{1324308}},
   doi={10.1137/0732030},
}

\bib{DeWolff}{article}{
   author={Dressler, Mareike},
   author={Iliman, Sadik},
   author={de Wolff, Timo},
   title={An approach to constrained polynomial optimization via nonnegative
   circuit polynomials and geometric programming},
   journal={J. Symbolic Comput.},
   volume={91},
   date={2019},
   pages={149--172},
   issn={0747-7171},
   review={\MR{3860889}},
   doi={10.1016/j.jsc.2018.06.018},
}
	
\end{biblist}
\end{bibdiv}
\end{document}